\begin{document}

	\newtheorem{theorem}{Theorem}
	\newtheorem{definition}[theorem]{Definition}
	\newtheorem{question}[theorem]{Question}
	
	\newtheorem{lemma}[theorem]{Lemma}
	\newtheorem{claim}[theorem]{Claim}
	\newtheorem*{claim*}{Claim}
	\newtheorem{corollary}[theorem]{Corollary}
	\newtheorem{problem}[theorem]{Problem}
	\newtheorem{conj}[theorem]{Conjecture}
	\newtheorem{fact}[theorem]{Fact}
	\newtheorem{observation}[theorem]{Observation}
	\newtheorem{prop}[theorem]{Proposition}

	\newcommand\marginal[1]{\marginpar{\raggedright\parindent=0pt\tiny #1}}
	
	\theoremstyle{remark}
	\newtheorem{example}[theorem]{Example}
	\newtheorem{remark}[theorem]{Remark}
	\newcommand{\RED}{\color{red}}
	
	\newcommand{\supp}{\mathrm{supp}}
	\def\eps{\varepsilon}
	\def\HH{\mathcal{H}}
	\def\E{\mathbb{E}}
	\def\C{\mathbb{C}}
	\def\R{\mathbb{R}}
	\def\Z{\mathbb{Z}}
	\def\N{\mathbb{N}}
	\def\PP{\mathbb{P}}
	\def\T{\mathbb{T}}
	\def\l{\lambda}
	\def\s{\sigma}
		\def\vp{\varphi}
	\def\t{\theta}
	\def\a{\alpha}
	\def\la{\langle}
	\def\ra{\rangle}	
	\def\endproof{{\hfill $\square$} }
	\def\Xt{\widetilde{X}}
	\def\Pt{\widetilde{P}}
	\def\Var{\mathrm{Var}}
	\def\PV{\mathrm{PV}}
	\def\NV{\mathrm{NV}}
	\def\NN{\mathcal{N}}
	\def\CC{\mathcal{C}}
	
	\def\cA{\mathcal{A}}
	\def\cQ{\mathcal{Q}}	
	\def\cC{\mathcal{C}}
	\def\cG{\mathcal{G}}
	\def\F{\mathcal{F}}
	\def\tm{\tilde{\mu}}
	\def\ts{\tilde{\sigma}}
	\def\L{\Lambda}
	\def\z{\zeta}
		\def\p{\partial}
	\def\g{\gamma}
	\def\Q{\mathcal{Q}}
	\newcommand{\Cov}{\mathrm{Cov}}
	\renewcommand{\P}{\mathbb{P}}
	\newcommand{\Corr}{\mathrm{Corr}}
	\newcommand{\SCS}{\mathrm{SCS}}
	\newcommand{\SCC}{\mathrm{SCC}}
	\newcommand{\one}{\mathbf{1}}
	\newcommand{\bx}{\mathbf{x}}
	\newcommand{\by}{\mathbf{y}}
		\newcommand{\bu}{\mathbf{u}}
		\newcommand{\bv}{\mathbf{v}}
	\newcommand{\bw}{\mathbf{w}}
	\newcommand{\bz}{\mathbf{z}}
	\newcommand{\bt}{\mathbf{t}}
	\newcommand{\mes}{\mathrm{mes}}
	\newcommand{\tr}{\mathrm{tr}}
	\newcommand{\Exp}{\mathrm{Exp}}
	\newcommand{\br}{\mathbf{r}}
		\def\im{\mathrm{Im}}
		\def\re{\mathrm{Re}}
\pagestyle{plain}

\title{Real roots near the unit circle of random polynomials}
\author{Marcus Michelen}
\address{The University of Illinois at Chicago. Department of Mathematics, Statistics and Computer Science}
\email{michelen.math@gmail.com}

\begin{abstract}
Let $f_n(z) = \sum_{k = 0}^n \eps_k z^k$ be a random polynomial where $\eps_0,\ldots,\eps_n$ are i.i.d.  random variables with $\E \eps_1 = 0$ and  $\E \eps_1^2 = 1$. Letting $r_1, r_2,\ldots, r_k$ denote the real roots of $f_n$, we show that the point process defined by $\{|r_1| - 1,\ldots, |r_k| - 1 \}$ converges to a non-Poissonian limit on the scale of $n^{-1}$ as $n \to \infty$.  Further, we show that for each $\delta > 0$, $f_n$ has a real root within $\Theta_{\delta}(1/n)$ of the unit circle with probability at least $1 - \delta$.  This resolves a conjecture of Shepp and Vanderbei from 1995 by confirming its weakest form and refuting its strongest form.
\end{abstract}

\maketitle

\section{Introduction}

Our main object of study is the set of real roots of the random polynomial
\begin{equation}\label{eq:f-def}
f_n(z) = \sum_{k = 0}^n \eps_k z^k
\end{equation}
with $\eps_k$ i.i.d.\ with $\E \eps_1 = 0$ and $\E \eps_1^2 =1$.

The behavior of real roots of the random polynomial $f_n$ has been studied intensely over the past century.  Even further back, the issue of the number of real roots of a random---or ``typical''---polynomial was pondered as early as 1782 by Waring (see \cite{kostlan-chapter}).  Among the first rigorous results on this topic were by Bloch and P\'olya \cite{bloch-polya} which stated that the average number of real roots is  $O(n^{1/2})$ when the coefficients $\eps$ are i.i.d.\ and uniformly distributed in $\{-1,0,1\}$.  In an  influential sequence of papers, Littlewood and Offord \cite{littlewood-offord1,littlewood-offord2,littlewood-offord4,littlewood-offord3} showed that typically a random polynomial has at most $O(\log^2(n))$ and at least $\Omega(\log(n)/\log \log \log(n))$ real roots for various distributions of coefficients.  This series of work  included the cases when the coefficients are i.i.d.\ and taken uniformly from $\{-1,1\}$ which ultimately led to Littlewood-Offord theory.  

Shortly after Littlewood-Offord, Kac \cite{kac1943average} found an exact formula for the number of real roots when the coefficients are standard (real) Gaussians and subsequently showed that if $N_\R$ is the number of real roots of $f$ then 
\begin{equation} \label{eq:expected-real-roots}
\E N_\R \sim \frac{2}{\pi}\log n\,.
\end{equation} 
Independently, Rice \cite{rice1,rice2} found a similar exact formula for the expected number of times a sufficiently nice stochastic process crosses a given value.  The machinery build by Kac and Rice---variously called Kac formulas, Rice formulas or Kac-Rice formulas---reduces the problem of computing the expected number of real roots to analyzing the asymptotic behavior of an integral.  However, the Kac-Rice formula is most tractable in the case when the coefficients are Gaussian and typically only provides useful information when the coefficients are absolutely continuous with respect to Lebesgue measure.   From here, this asymptotic result was generalized for various different cases of coefficients, beginning with Kac who proved \eqref{eq:expected-real-roots} when the coefficients are uniform in $[-1,1]$ using a Kac-Rice formula \cite{kac1949}.  Erd\H{o}s and Offord proved \eqref{eq:expected-real-roots} when the coefficients are uniform in $\{-1,1\}$ by counting sign changes in small intervals \cite{erdosOfford}.  This approach was generalized by Stevens \cite{stevens} to a wider family of coefficients.  The study of average number of real roots culminated in a series of papers by Ibragimov and Maslova \cite{ibragimov-maslova4,ibragimov-maslova1,ibragimov-maslova3,ibragimov-maslova2} which proved \eqref{eq:expected-real-roots} is \emph{universal} in the sense that it holds for all mean-zero distributions with mild assumptions. For a modern view of \eqref{eq:expected-real-roots} along with lower order terms, see \cite{edelman-kostlan}. 

Beyond examining only the average number of real roots, Maslova showed a central limit theorem for the number of real roots \cite{maslova}.  The lower tail for the number or real roots was studied by Dembo, Poonen, Shao and Zeitouni \cite{dembo-etal} who found the asymptotic probability that $f$ has exactly $k$ real roots for a wide class of coefficients.  

Moving away from the real axis, \v{S}paro and \v{S}ur \cite{sparoSur} showed the well-known result that most roots of $f$ are near the unit circle.  Beyond results of this form, little work has been done on finer information on the \emph{location} of real roots.   In 1995, Shepp and Vanderbei \cite{shepp-vanderbei} made the following conjecture in this direction:

\begin{conj}
	Let $\eps_j$ be i.i.d.\ standard Gaussians.  Then $f_n$ has a real root with absolute value $1 + O(n^{-1})$ with high probability, meaning that for each $\delta > 0$, $f_n$ has a real root of absolute value $1 + O_\delta(n^{-1})$ with probability at least $1 - \delta$.  Further, if we let $R_n$ to be the set of real roots of $f_n$, then the point process \begin{equation} \label{eq:PP-def}
	\{n(|r| - 1) : r \in R_n \}
	\end{equation} 
	converges in distribution to a Poisson process.  
\end{conj}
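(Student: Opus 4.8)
The plan is to prove both assertions by reducing to a local scaling limit of $f_n$ near the only two points on the unit circle that can attract real roots, namely $z = 1$ and $z = -1$, and then analyzing the resulting limiting point process.

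First I would set up the scaling limit. Define $g_n^+(w) = n^{-1/2} f_n(1 + w/n)$ for $w$ in a fixed complex neighborhood of a real interval $[-C, C]$. Since $(1 + w/n)^k = e^{wk/n}(1 + o(1))$ uniformly over $0 \le k \le n$ and $w$ bounded, a direct second-moment computation gives
\[
\frac{1}{n}\sum_{k=0}^n (1 + s/n)^k (1 + t/n)^k \longrightarrow \int_0^1 e^{(s+t)x}\,dx = \frac{e^{s+t} - 1}{s + t}.
\]
Applying the Lindeberg central limit theorem to the linear statistics $\sum_k \eps_k (1 + w/n)^k$ --- for which only $\E\eps_1 = 0$ and $\E\eps_1^2 = 1$ are needed, and the Lindeberg ratio is $O(1/n)$ --- the finite-dimensional distributions of $g_n^+$ converge to those of the centered Gaussian analytic function $F^+(w) = \int_0^1 e^{wx}\,dB(x)$ with covariance $\frac{e^{s+t}-1}{s+t}$, and Cauchy estimates off the real axis give tightness in the space of analytic functions. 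The same construction at $-1$, through $g_n^-(w) = n^{-1/2} f_n(-1 + w/n)$, produces a Gaussian limit $F^-$ of the same type; crucially the cross-covariance $\frac{1}{n}\sum_k (-1)^k(1+s/n)^k(1-t/n)^k$ carries an alternating sign and tends to $0$, so $F^+$ and $F^-$ are \emph{independent}.

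Next I would transfer zeros. Because $g_n^\pm \to F^\pm$ in distribution as analytic functions, Hurwitz's theorem matches their zero sets: the real roots of $f_n$ within $O(1/n)$ of $1$, rescaled by $n$, converge to the real zeros of $F^+$, once one excludes zeros escaping to the window boundary and tangential double zeros, which occur with probability zero for the nondegenerate Gaussian $F^+$. Tracking coordinates, a real root $r$ near $1$ contributes $n(|r| - 1) = n(r - 1) \to$ a real zero of $F^+$, while a real root near $-1$ contributes $n(|r|-1) \to$ the reflection of a real zero of $F^-$. Hence $\{ n(|r|-1) : r \in R_n \}$ converges to the superposition of the real-zero processes of the two independent fields $F^+$ and $F^-$.

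Finally, to identify this superposition as a Poisson process and to deduce existence, I would compute the $k$-point correlation functions $\rho_k$ of the real zeros of $F^+$ from the Kac-Rice formula for the Gaussian pair $(F^+, (F^+)')$, and verify the factorization $\rho_k(t_1,\dots,t_k) \to \prod_{i=1}^k \rho_1(t_i)$ that characterizes a Poisson process; the one-point intensity then has the tail $\rho_1(t) \sim (\pi |t|)^{-1}$, consistent with the global count $\tfrac{2}{\pi}\log n$ in \eqref{eq:expected-real-roots}, so $\int_\R \rho_1 = \infty$. Granting factorization, the method of factorial moments upgrades this to convergence of the point process to a Poisson process with intensity $\rho_1$, and the divergence of $\int_{-C}^{C}\rho_1$ as $C \to \infty$ forces $\PP(\text{no real root within } C/n) \to e^{-\mu([-C,C])} \to 0$, which is exactly the weakest, high-probability form of the conjecture. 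The main obstacle is precisely this factorization step. The covariance $\frac{e^{s+t}-1}{s+t}$ does not decay additively: the normalized correlation of $F^+(s)$ and $F^+(t)$ remains close to $1$ unless $s$ and $t$ are separated on a \emph{multiplicative} scale, so the values and derivatives of $F^+$ at nearby points of a bounded window are genuinely coupled. Showing that the resulting off-diagonal contributions to the Kac-Rice integrand are negligible in the limit is the delicate heart of establishing the Poissonian behavior.
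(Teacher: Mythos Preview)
Your scaling-limit setup is correct and matches the paper's: the process $g_n^+(w)=n^{-1/2}f_n(1+w/n)$ does converge to the centered Gaussian process with covariance $\int_0^1 e^{(s+t)x}\,dx$, the analogous limit at $-1$ is an independent copy, and the real zeros converge to those of the limit. The paper carries this out via Kallenberg's avoidance-probability criterion rather than Hurwitz, but either route is fine.

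The genuine gap is in the final step. You propose to verify the factorization $\rho_k(t_1,\dots,t_k)\to\prod_i\rho_1(t_i)$ and thereby identify the limit as Poisson. This factorization \emph{fails}, and the paper proves precisely that the limit is \emph{not} Poisson: the two-point Kac-Rice integrand vanishes on the diagonal, so for a short interval $I=[-\delta,\delta]$ one has $\E[\nu(I)(\nu(I)-1)]=O(\delta^3)$, whereas a Poisson process with the same intensity would give $\P(\nu(I)>1)=\Theta(\delta^2)$. The ``main obstacle'' you flag---that the covariance does not decorrelate on an additive scale---is not a technical hurdle to be overcome but the mechanism that makes the zeros repel and destroys Poissonianity. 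So the second half of the conjecture is false, and your plan to establish it cannot succeed.

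This also undermines your route to the first (true) half, since you deduce the existence of a nearby root from the Poisson avoidance formula $e^{-\mu([-C,C])}\to 0$. The paper instead uses directly the observation you already made: the normalized covariance of $F^+(x)$ and $F^+(\alpha x)$ tends to $0$ as $\alpha\to\infty$. Choosing points $x_j=\alpha^{j-1}$ on a geometric grid gives $k$ values of $g_n$ whose signs are, in the limit, arbitrarily close to independent fair coins, forcing a sign change (hence a real root) in $[1,1+\alpha^{k-1}/n]$ with probability at least $1-2^{-k+3}$.
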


We provide a full resolution to both conjectures of Shepp and Vanderbei: indeed, there is a root in $1 + O(n^{-1})$ with high probability, but the point process defined by \eqref{eq:PP-def} \emph{does not} converge to a Poisson process.  Further, we do so for a more general class of coefficients.

\begin{theorem}\label{th:main}
	Let $\{\eps_k\}$ be i.i.d.\ random variables with $\E \eps_1 = 0$ and $\E \eps_1^2 =1$.  Then the point process $\{n(|r| - 1) : r \in R_n \}$ converges in distribution to a non-Poissonian limit.  Further, for each $\delta > 0$ there exists an $M > 0$ so that $$\P(R_n \cap [1 - M/n, 1 + M/n] \neq \emptyset) \geq 1 - \delta$$ for all $n$ sufficiently large.
\end{theorem}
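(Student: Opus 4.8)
The plan is to understand the real roots of $f_n$ near $+1$ (the roots near $-1$ are handled by the symmetry $z \mapsto -z$, and roots with $|z|$ bounded away from $1$ are negligible by \v{S}paro--\v{S}ur) through a \emph{local limit} description: after the substitution $z = 1 + x/n$ with $x$ in a bounded window, the random function $g_n(x) := f_n(1 + x/n)$ should converge, in a suitable sense, to a \emph{limiting Gaussian process} $F(x)$ on $\R$. The heuristic is that $\eps_k (1+x/n)^k = \eps_k e^{kx/n(1+o(1))}$, so $f_n(1+x/n) = \sum_{k=0}^n \eps_k e^{kx/n} (1 + \text{error})$; writing $k = tn$ and using $\E\eps_1 = 0$, $\E \eps_1^2 = 1$, a CLT/invariance-principle argument shows that the finite-dimensional distributions of $n^{-1/2} g_n(x)$ converge to those of a centered Gaussian process $F$ with covariance $\E F(x)F(y) = \int_0^1 e^{t(x+y)}\,dt = \frac{e^{x+y}-1}{x+y}$ (for $x,y$ on the relevant side), together with joint convergence of $(g_n, g_n')$ so that one controls the derivative at roots. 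The real roots of $f_n$ in $[1-M/n,1+M/n]$ are exactly the zeros of $g_n$ in $[-M,M]$, and one wants to show these converge in distribution (as a point process) to the zero set of $F$.

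The key steps, in order: (i) Reduce to a bounded window: show that with probability $\to 1$ all real roots lie within distance $C(\delta)/n$ of $\{+1,-1\}$ except for an event of probability $\delta$; this is where one invokes the known ``roots cluster near the unit circle'' results and a quantitative version giving the $n^{-1}$ scale for the real ones (controlling $f_n(x)$ for $x$ real, $|x|$ away from $1$, via anti-concentration / second-moment bounds). (ii) Establish the functional convergence $n^{-1/2}(g_n, g_n') \Rightarrow (F, F')$ on $C[-M,M]$: finite-dimensional convergence by the Lindeberg CLT (the summands are $\eps_k$ times smooth bounded weights, so Lindeberg's condition follows from $\E\eps_1^2 = 1$ finite), plus tightness from a moment bound on increments (fourth-moment or Kolmogorov-type estimate, using that the increments of the weight functions are $O(1/n)$-Lipschitz on bounded windows). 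Here one must be slightly careful because the $\eps_k$ need not have moments beyond the second; one handles tightness via a truncation argument, splitting $\eps_k = \eps_k \one_{|\eps_k| \le T} + \eps_k \one_{|\eps_k| > T}$ and showing the large part contributes negligibly in $L^2$. (iii) Transfer zero-set convergence: since $F$ is a nondegenerate analytic Gaussian process, a.s.\ it has only simple zeros and none at $\pm M$, so the map from a $C^1$ function to its zero set is a.s.\ continuous at $F$; combined with (ii) this gives $\{n(|r|-1)\} \Rightarrow \Xi$ where $\Xi$ is the zero point process of $F$ (on each side), proving the convergence claim. (iv) Non-Poissonian: compute the one- and two-point intensities of $\Xi$ (via Kac--Rice applied to the \emph{limiting} Gaussian process $F$, which is now analytically tractable) and check that the pair correlation is not identically $1$ — e.g.\ show the factorial-moment/void probabilities differ from a Poisson process, or exhibit correlation between nearby zeros (real zeros of a smooth Gaussian process generically repel or attract, never being independent). (v) The ``root within $M/n$'' statement: since $\Xi$ is a.s.\ nonempty intensity measure is positive and, more quantitatively, $\P(\Xi \cap [-M,M] = \emptyset) \to 0$ as $M \to \infty$; pulling this back through the weak convergence of step (iii) gives, for each $\delta$, an $M$ with $\P(R_n \cap [1-M/n,1+M/n] = \emptyset) \le \delta$ for large $n$. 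One should double-check that the weak convergence of point processes does transfer the void probability of an interval whose endpoints are a.s.\ not in $\Xi$ — it does, since void probabilities of such ``continuity sets'' are preserved.

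I expect the main obstacle to be step (ii) under only a second-moment hypothesis: proving tightness of $n^{-1/2} g_n$ in $C^1$ without higher moments requires a careful truncation plus a chaining/maximal-inequality argument, and one also needs the \emph{joint} tightness of $(g_n, g_n')$ so that zeros don't ``escape'' or ``merge'' in the limit — equivalently, one needs a quantitative non-degeneracy (a lower bound on $\mathrm{Var}(g_n(x))$ uniform in the window, and small-ball control ruling out $g_n$ being simultaneously tiny with tiny derivative). A secondary difficulty is making step (i) quantitative enough to pin the scale at exactly $n^{-1}$ rather than, say, $n^{-1}\log n$: this needs a lower bound on $|f_n(x)|$ for $x = 1 + c/n$ with $c$ large, i.e.\ showing the Gaussian-limit function $F(x)$ grows (its variance $\frac{e^{2x}-1}{2x} \to \infty$), so that a root far out in the window is exponentially unlikely, uniformly in $n$.
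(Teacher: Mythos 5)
Your overall strategy --- rescale via $z = 1+x/n$, prove a functional CLT for $n^{-1/2}f_n(1+x/n)$ to a Gaussian process with covariance $\int_0^1 e^{(x+y)t}\,dt$, transfer the zero sets, and use Kac--Rice on the limit to rule out Poisson --- is the same as the paper's, and most of it would go through (your worry about tightness is overblown: $\E\max_{[-M,M]}|g_n^{(j)}|\le e^{M}$ follows from Taylor expansion at $0$ plus Cauchy--Schwarz using only second moments, so no truncation is needed; and your ``continuity of the zero-set map at a.s.\ simple zeros'' is a legitimate substitute for the paper's Kallenberg-criterion argument). But there is one genuine missing idea: your step (v) asserts, without proof, that $\P(\Xi\cap[-M,M]=\emptyset)\to 0$ as $M\to\infty$. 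That statement \emph{is} the second half of the theorem (after pulling back through weak convergence), and nothing in your outline produces it: positivity of the intensity measure does not prevent a point process from being empty with positive probability, and a second-moment/Paley--Zygmund argument would only give a probability bounded below by a constant, not $1-\delta$. The paper's mechanism is a concrete one: for $x\ge 1$ and $y=\alpha x$ the correlation of $g(x)/\sqrt{\Var g(x)}$ and $g(y)/\sqrt{\Var g(y)}$ is $O(\alpha/(1+\alpha)^2)$, so evaluating $g$ at the geometrically spaced points $x_j=\alpha^{j-1}$, $j\le k$, gives a Gaussian vector close in total variation to one with i.i.d.\ coordinates, whence all $k$ signs agree with probability at most $2^{-k+2}$ and a sign change (hence a zero) occurs in $[1,\alpha^k]$ with probability $1-2^{-k+2}$. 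Some argument of this kind (or an ergodicity/mixing argument for the log-stationary rescaled process) is indispensable and absent from your plan.

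Two smaller points. First, your step (i) is both unnecessary and false as stated: the total number of real roots is $\sim\frac{2}{\pi}\log n$ while only $O(1)$ of them lie within $O(1/n)$ of $\pm1$, so ``all real roots lie within $C/n$ of $\{\pm1\}$ w.h.p.'' fails; fortunately vague convergence of $\{n(|r|-1)\}$ only concerns bounded windows, so roots at distance $\gg 1/n$ simply escape to infinity and need not be controlled. Second, the limit point process is the \emph{superposition} of the processes coming from roots near $+1$ and near $-1$; showing each factor is non-Poisson does not by itself show the sum is. You need the asymptotic independence of the two local processes (the cross-covariance $\E g_n^{(i)}(x)h_n^{(j)}(y)\to 0$ by cancellation of the $(-1)^r$ signs) together with Raikov's theorem, or else a direct computation of the two-point intensity of the superposed limit; your appeal to ``the symmetry $z\mapsto -z$'' does not address this.
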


For an intuitive reason to understand why the limit is non-Poissonian, the roots of $f_n$ within distance $O(1/n)$ repel each other and so knowing there is a root within $O(1/n)$ of $1$ greatly reduces the likelihood of there being another.

In a paper by the author with Julian Sahasrabudhe \cite{michelen-sahasrabudhe}, we answered a question of Shepp and Vanderbei about the \emph{complex} roots of $f_n$ in the case where $\eps_j$ are i.i.d. standard Gaussians.  There, we show that the point process defined by $\{n^2 (|\zeta| - 1) : \zeta \in Z_n  \}$ where $Z_n$ is the set of roots of $f_n$ converges to the Poisson process of intensity $1/12$.  In this light, Theorem \ref{th:main} demonstrates a stark contrast between the behavior of real roots and complex roots near the unit circle.
\subsection{Overview and Organization}

In order to show convergence of the point process $\{n(|r| - 1) : r \in R_n  \}$, we show that if we define $g_n(x) := \frac{1}{\sqrt{n}} f_n(1 + x/n)$, then $g_n$ converges in distribution to a Gaussian process $g$.  We show that convergence occurs in a strong enough sense in order to determine that the roots of $g_n$---which are the same as the roots of $f_n$ up to an affine transformation---converge weakly to the zeros of $g$.  While this only appears to handle the case of \emph{positive} real roots, it will also be the case that the negative real roots captured by the function $\tilde{g}_n(x) := \frac{1}{\sqrt{n}} f_n(-1-x/n)$  converge to a copy of $g$; further, the processes $g_n$ and $\tilde{g}_n$ are asymptotically independent.  For simplicity, we first show convergence of the point process defined only by the positive roots and then upgrade these results and show convergence of the positive and negative roots simultaneously.  Convergence of the process $g_n$ is carried out in Section \ref{sec:process} and convergence of the positive roots is the content of Section \ref{sec:zeros}.   The details showing convergence of positive and negative roots simultaneously is taken care of in Appendix \ref{app:negative}.

To show that there is a real root near $1$ with high probability, we look to the limiting process $g$: if $x/y$ is large, then $g(x)$ and $g(y)$ are quite weakly correlated.  This will mean that for $M$ large enough, we will be able to find many points in $[-M,M]$ on which $g$ looks close to a centered multivariate Gaussian with independent coordinates.  It then follows that $g$ exhibits a sign change among these points with high probability.  Using convergence of $g_n$ to the limit $g$ then shows that there must be a root near $1$ with high probability.  This is shown in Section \ref{sec:near-1}.

Finally, we show that the limiting process is not Poissonian in Section \ref{sec:not-Poisson}.  The key idea here is that the zeros of $g$ repel each other while the zeros of a Poisson process do not.  To capture this repulsive property we use the Kac-Rice formula for the second combinatorial moment to show that for a small interval $I$ around zero, the probability that $g$ has two zeros in $I$ is $O(|I|^3)$ while the probability it has a zero in that interval is $\Theta(|I|)$.  This will show that the point process defined by the positive roots is not Poissonian.  Using Raikov's Theorem---i.e.\ the fact that if $X$ and $Y$ are independent then $X + Y$ is a shifted Poisson if and only if $X$ and $Y$ are---shows that the limiting process is not Poisson.

\subsection{Notation}

For non-negative integers $a$ and $b$ we write $(a)_b := a(a-1)\cdots (a-b+1)$ for the falling factorial where we interpret $(a)_0 = 1$.  For a set $U \subset \R$ we write $\mathcal{C}(U)$ for the set of continuous functions from $U$ to $\R$.  For an interval $I$ we write $|I|$ for the Lebesgue measure of $I$.

We make frequent use of various asymptotic notation.  We write $f = O(g)$ if there is an implicit constant $C$ so that $|f| \leq C |g|$; similarly we write $f = \Omega(g)$ if $|f| \geq c |g|$ for some $c > 0$; finally we write $f = \Theta(g)$ if $f = O(g)$ and $f = \Omega(g)$.  If the implicit constant depends on a parameter $\eps$ then we may write $f = O_\eps(g)$ to emphasize the dependence.  If a sequence $a_n$ tends to zero as $n \to \infty$, we write $a_n = o(1)$.  We extend this notation for real parameters: if $f(\alpha)$ is a function with $f(\alpha) \to 0$ as $\alpha \to \infty$ we write $f = o_{\alpha \to \infty}(1)$.

\section{A root near $1$} \label{sec:near-1}

To show that with high probability there is a root near $1$, we will need a multivariate central limit theorem for $g_n$ evaluated at any finite collection of points; to show convergence of the process $g_n$, we will also need control over its first two derivatives.  In order to prove this central limit theorem, we first find the limiting covariance:
\begin{lemma}\label{lem:covariance-calc}
	For each fixed $x,y \in \R$ and integers $i,j \geq 0$, we have $$\lim_{n \to \infty} \E g_n^{(i)}(x) g_n^{(j)}(y) = \int_0^1 t^{i + j} e^{(x+y)t}\,dt\,.$$
\end{lemma}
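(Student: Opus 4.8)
The plan is to evaluate the expectation exactly from the coefficient representation of $g_n$ and then recognize the answer as a Riemann sum. Since $f_n$ is a polynomial, $g_n(x) = n^{-1/2}\sum_{k=0}^n \eps_k(1+x/n)^k$ may be differentiated term by term, giving
\[
g_n^{(i)}(x) = \frac{1}{\sqrt n}\sum_{k=0}^n \eps_k\,(k)_i\,n^{-i}(1+x/n)^{k-i},
\]
where the falling factorial $(k)_i$ vanishes for $k<i$, so no negative powers of $1+x/n$ actually occur. Expanding the product $g_n^{(i)}(x)g_n^{(j)}(y)$, taking expectations, and using $\E[\eps_k\eps_\ell] = \one\{k=\ell\}$ — the single place the hypotheses $\E\eps_1 = 0$, $\E\eps_1^2 = 1$ enter — collapses the resulting double sum to its diagonal:
\[
\E g_n^{(i)}(x)g_n^{(j)}(y) = \frac1n\sum_{k=0}^n b_{n,k}, \qquad b_{n,k} := \frac{(k)_i(k)_j}{n^{i+j}}(1+x/n)^{k-i}(1+y/n)^{k-j}.
\]

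Next I would show that, writing $t = k/n$, each summand $b_{n,k}$ is close to $\phi(t):=t^{i+j}e^{(x+y)t}$, uniformly over $0\le k\le n$ (here $n$ is large enough that $1+x/n,1+y/n>0$). This rests on three elementary estimates, with implied constants depending only on $x,y,i,j$: comparing the falling factorial to $k^i$ gives $(k)_i/n^i = (k/n)^i + O(1/n)$; from $\log(1+x/n) = x/n + O(1/n^2)$, multiplying by $k-i\le n$ and exponentiating gives $(1+x/n)^{k-i} = e^{kx/n} + O(1/n)$; and symmetrically $(1+y/n)^{k-j} = e^{ky/n} + O(1/n)$. Since $(k/n)^{i+j}\le 1$ and $e^{k(x+y)/n}\le e^{|x|+|y|}$ are bounded, multiplying these yields $b_{n,k} = \phi(k/n) + O(1/n)$ uniformly in $k$, and therefore
\[
\E g_n^{(i)}(x)g_n^{(j)}(y) = \frac1n\sum_{k=0}^n\phi(k/n) + O(1/n) \longrightarrow \int_0^1\phi(t)\,dt = \int_0^1 t^{i+j}e^{(x+y)t}\,dt,
\]
the convergence being that of a Riemann sum of the continuous function $\phi$ on $[0,1]$, with the boundary term $\phi(1)/n$ harmless. (An alternative that avoids the uniform error bound is to write $\frac1n\sum_k b_{n,k} = \int_0^1\psi_n$ for the step function $\psi_n(t) := b_{n,\lfloor nt\rfloor}$, check $\psi_n(t)\to\phi(t)$ for each $t\in(0,1)$ together with the uniform bound $|\psi_n|\le e^{|x|+|y|}$, and invoke dominated convergence.)

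The only point needing care is the factor $(1+x/n)^{k-i}$, which for $x>0$ grows with $k$; but uniformly over $k\le n$ it stays bounded by $(1+|x|/n)^{n}\le e^{|x|}$, and this boundedness is precisely what legitimizes both the uniform error estimate and the dominated-convergence step. The small-$k$ terms require no separate treatment, since $(k/n)^{i+j}$ is small there (or $(k)_i = 0$ outright); everything else is a routine estimate.
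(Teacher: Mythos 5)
Your proof is correct and follows essentially the same route as the paper: compute the covariance exactly using $\E[\eps_k\eps_\ell]=\one\{k=\ell\}$, approximate the summands by $t^{i+j}e^{(x+y)t}$ with a uniform $O(1/n)$ error, and pass to the limit as a Riemann sum. You are in fact slightly more careful than the paper's three-line computation, e.g.\ in tracking the exponent $k-i$ rather than $k$ and in justifying the uniform error bound, but the argument is the same.
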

\begin{proof}
	We have \begin{align*}
	\E g_n^{(i)}(x) g_n^{(j)}(y) &= \frac{1}{n^{1+i+j}}\sum_{r = 0}^n (r)_i (r)_j (1 + x/n)^r (1 + y/n)^r  \\
	&= \frac{1}{n} \sum_{ r= 0}^n (r/n)^{i+j} e^{(x+y)r/n} + O(1/n) \\
	&\to \int_0^1 t^{i+j} e^{(x+y)t}\,dt\,.
	\end{align*}
\end{proof}

Define $g$ to be the mean-zero Gaussian process on $\R$ defined by covariance $$ \E g(x) g(y) = \int_0^1 e^{(x + y)t}\,dt\,.$$

We first show that covariance function indeed defines a Gaussian process:

\begin{lemma}\label{lem:g-exists}
	The process $g$ exists and is smooth almost surely.
\end{lemma}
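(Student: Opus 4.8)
The plan is to realize $g$ explicitly as a stochastic integral rather than invoking an abstract construction, since the covariance kernel $K(x,y) = \int_0^1 e^{(x+y)t}\,dt$ is already presented in a form that suggests a concrete model. First I would set $g(x) := \int_0^1 e^{xt}\,dW(t)$, where $W$ is a standard Brownian motion on $[0,1]$ (equivalently, $W$ is white noise integrated against $L^2([0,1])$). The Itô isometry immediately gives $\E g(x)g(y) = \int_0^1 e^{xt}e^{yt}\,dt = \int_0^1 e^{(x+y)t}\,dt$, which matches the prescribed covariance, and linearity of the integral shows that any finite collection $(g(x_1),\dots,g(x_m))$ is a centered Gaussian vector, so this process has exactly the right finite-dimensional distributions. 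This sidesteps any positive-definiteness verification, though one could alternatively note directly that $K$ is a Gram matrix of the functions $t \mapsto e^{x_i t}$ in $L^2([0,1])$ and is therefore positive semidefinite.

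For smoothness, the key observation is that differentiating under the integral sign is legitimate here: for each fixed $x$ the candidate $k$-th derivative is $g^{(k)}(x) = \int_0^1 t^k e^{xt}\,dW(t)$, and these are all well-defined $L^2$ random variables since $t \mapsto t^k e^{xt}$ lies in $L^2([0,1])$. I would verify the interchange by checking that the difference quotients converge in $L^2$, i.e. that $\frac{g(x+h)-g(x)}{h} - \int_0^1 t e^{xt}\,dW(t) \to 0$ in $L^2$ as $h \to 0$; this reduces via the Itô isometry to the deterministic fact that $\big\|\frac{e^{(x+h)t}-e^{xt}}{h} - t e^{xt}\big\|_{L^2([0,1])} \to 0$, which is elementary (the integrand is uniformly controlled on the compact interval $[0,1]$). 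To upgrade from $L^2$-differentiability to almost-sure smoothness, I would apply the Kolmogorov continuity theorem to the process $x \mapsto g^{(k)}(x)$ for each $k$: since $g^{(k)}(x) - g^{(k)}(y)$ is Gaussian, all its moments are controlled by its variance, and $\E|g^{(k)}(x) - g^{(k)}(y)|^2 = \int_0^1 t^{2k}(e^{xt} - e^{yt})^2\,dt = O(|x-y|^2)$ locally uniformly, which is far more than Kolmogorov requires; hence each $g^{(k)}$ has a continuous modification. Taking these modifications consistently (e.g. on a common probability-one event, using that the $L^2$-derivative relations pin down $g^{(k)}$ as the genuine derivative of $g^{(k-1)}$ on a dense set and then by continuity everywhere) yields a version of $g$ that is almost surely $C^\infty$.

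The main obstacle I anticipate is not any single hard estimate — all the analytic bounds above are routine because everything happens on the compact parameter set $[0,1]$ — but rather the bookkeeping needed to assemble the separately-constructed continuous modifications of $g, g', g'', \dots$ into one almost-surely smooth function whose $k$-th derivative really is $g^{(k)}$ simultaneously for all $k$. I would handle this by working on a single probability-one event on which, for every $k$ and every rational $x$, the $L^2$-identity $g^{(k)}(x) = \int_0^1 t^k e^{xt}\,dW(t)$ holds and the continuous-modification property is in force; on that event, continuity forces the derivative relations to hold at all real $x$, giving genuine infinite differentiability. An entirely equivalent route, if one prefers to avoid the stochastic integral, is to use the series representation $g(x) = \sum_{j} \xi_j \phi_j(x)$ coming from a Mercer/Karhunen–Loève expansion of $K$ on a bounded interval, where the $\xi_j$ are i.i.d.\ standard Gaussians and the eigenfunctions $\phi_j$ are smooth; term-by-term differentiation then converges locally uniformly almost surely by a standard Gaussian tail-bound argument. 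Either way the proof is short, and I would favor the explicit $\int_0^1 e^{xt}\,dW(t)$ model for transparency and because it makes the covariance computation in Lemma~\ref{lem:covariance-calc} and the derivative covariances literally read off from the Itô isometry.
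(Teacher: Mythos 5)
Your proposal is correct in substance but takes a genuinely different route from the paper. The paper's proof is two sentences: existence follows from Kolmogorov's extension theorem once one observes that the covariance matrix $\bigl(\int_0^1 e^{(x_i+x_j)t}\,dt\bigr)_{i,j}$ is the Gram matrix of the linearly independent functions $t\mapsto e^{x_i t}$ in $L^2([0,1])$ and hence positive definite, and almost-sure smoothness is then quoted directly from Aza\"is--Wschebor as a consequence of the covariance kernel being smooth. You instead build the process explicitly as the Wiener integral $g(x)=\int_0^1 e^{xt}\,dW(t)$, which makes positive-definiteness automatic, makes the covariance and all derivative covariances literal consequences of the It\^o isometry (and would indeed make Lemma~\ref{lem:covariance-calc} transparent), and replaces the black-box smoothness citation with a hands-on Kolmogorov-continuity argument. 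The trade-off is that you then owe the assembly bookkeeping, and the one place your sketch is not quite airtight is the claim that the $L^2$-derivative relations ``pin down $g^{(k)}$ as the genuine derivative of $g^{(k-1)}$ on a dense set and then by continuity everywhere'': $L^2$-differentiability at each point of a dense set does not by itself yield almost-sure pointwise differentiability. The standard repair is to verify the almost-sure identity $g^{(k-1)}(x)=g^{(k-1)}(0)+\int_0^x g^{(k)}(u)\,du$ --- first for each fixed $x$ by computing that the $L^2$-difference of the two sides vanishes (a Fubini-type computation with the Wiener integral), then simultaneously for all $x$ by continuity of both sides --- and conclude via the fundamental theorem of calculus that $g^{(k-1)}$ is $C^1$ with derivative $g^{(k)}$ on a single probability-one event. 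With that fix your argument is complete and, arguably, more self-contained than the paper's, at the cost of length.
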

\begin{proof}
	By Kolmogorov's extension theorem \cite[Section 1.2]{adler2009random}, in order to show that $g$ exists it is sufficient to show that for each finite collection of distinct points $\{x_j\}_{j = 1}^k$ the matrix $\Sigma:=(\int_0^1 e^{(x_i + x_j)t}\,dt )_{i,j}$ is positive definite.  Since the functions $e^{x_i t}$ are linearly independent elements of $L^2([0,1])$, the matrix $\Sigma$ is a Gram matrix of linearly independent elements and thus is positive definite.  Since the covariance function $(x,y) \mapsto \E g(x) g(y) = \int_0^1 e^{(x+y)t}\,dt$ is smooth, an almost surely smooth version of $g$ exists \cite[p.\ 30]{azais2009level}.
\end{proof}

Lemmas \ref{lem:covariance-calc} and \ref{lem:g-exists} together show that the covariances of $g_n$ and its derivatives converge to those of $g$ as $n \to \infty$.  This in fact will show convergence of finite dimensional distributions of $g_n$ and its first two derivatives to those of $g$.

\begin{lemma}\label{lem:fidi}
	Let $x_1,\ldots, x_k \in \R$ be distinct points.  Then as $n \to \infty$ the vector $(g_n^{(j)}(x_i) )_{i \in [k], 0 \leq j \leq 2}$ converges in distribution to the multivariate Gaussian $(g^{(j)}(x_i) )_{i \in [k],0 \leq j \leq 2}$.
\end{lemma}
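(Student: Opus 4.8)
The plan is to prove a multivariate central limit theorem for the random vector
$V_n := (g_n^{(j)}(x_i))_{i \in [k], 0 \le j \le 2}$, which is a $3k$-dimensional vector.
The first observation is that each coordinate $g_n^{(j)}(x_i) = \frac{1}{\sqrt n}\sum_{r=0}^n (r)_j (1+x_i/n)^{r-j}\eps_r$ is a linear combination $\sum_{r=0}^n c_r^{(n)}\eps_r$ of the i.i.d.\ coefficients, so $V_n = \sum_{r=0}^n \eps_r w_r^{(n)}$ is a sum of independent mean-zero random vectors in $\R^{3k}$, where $w_r^{(n)} \in \R^{3k}$ has entries $\frac{1}{\sqrt n}(r)_j(1+x_i/n)^{r-j}$. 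To prove asymptotic normality of such a triangular array I would invoke the Cram\'er--Wold device: it suffices to show that for every fixed $\theta \in \R^{3k}$, the scalar sum $S_n := \langle \theta, V_n\rangle = \sum_{r=0}^n \eps_r \langle \theta, w_r^{(n)}\rangle$ converges in distribution to a centered Gaussian with the correct variance. By Lemma \ref{lem:covariance-calc} the covariance matrix $\Sigma_n := \E[V_n V_n^{\mathsf T}]$ converges entrywise to the matrix $\Sigma$ whose $((i,j),(i',j'))$ entry is $\int_0^1 t^{j+j'}e^{(x_i+x_{i'})t}\,dt$; this is exactly the covariance matrix of the limiting Gaussian vector $(g^{(j)}(x_i))$, which exists and is (for distinct $x_i$) nondegenerate by the argument in Lemma \ref{lem:g-exists} applied to the functions $t \mapsto t^j e^{x_i t}$, which are linearly independent in $L^2([0,1])$. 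Thus $\Var(S_n) = \theta^{\mathsf T}\Sigma_n\theta \to \theta^{\mathsf T}\Sigma\theta =: \sigma_\theta^2$.

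For the scalar CLT I would use the Lindeberg central limit theorem for the row-independent triangular array $\{a_r^{(n)}\eps_r\}_{r=0}^n$ with $a_r^{(n)} := \langle \theta, w_r^{(n)}\rangle$. The Lindeberg condition reduces, after noting $\E[(a_r^{(n)}\eps_r)^2] = (a_r^{(n)})^2$ and $\sum_r (a_r^{(n)})^2 \to \sigma_\theta^2$, to showing
$\sum_{r=0}^n (a_r^{(n)})^2 \,\E\!\left[\eps_1^2 \mathbf{1}\{|\eps_1| > \delta / \max_r |a_r^{(n)}|\}\right] \to 0$
for every fixed $\delta > 0$. The coefficients satisfy $|a_r^{(n)}| = O(n^{-1/2})$ uniformly in $r$: indeed $(r)_j \le n^j$ and $(1+x_i/n)^{r-j} \le e^{|x_i|} = O(1)$ for $0 \le r \le n$, so the prefactor $\frac{1}{\sqrt n}(r)_j(1+x_i/n)^{r-j}$ is $O(n^{j-1/2})$ --- wait, this is not $o(1)$ for $j \ge 1$, so one must normalize correctly: the quantity $g_n^{(j)}(x)$ as defined already includes the $n^{-(1+i+j)}$ scaling visible in Lemma \ref{lem:covariance-calc}, so in fact $a_r^{(n)} = \frac{1}{n^{(1+j)/2}}(r)_j(1+x/n)^{r-j}\theta$-entry, giving $|a_r^{(n)}| = O(n^{-1/2})$ uniformly. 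Hence $\max_r |a_r^{(n)}| \to 0$, so the indicator $\mathbf{1}\{|\eps_1| > \delta/\max_r|a_r^{(n)}|\}$ converges to $0$ pointwise; since $\E\eps_1^2 = 1 < \infty$, dominated convergence gives $\E[\eps_1^2\mathbf{1}\{\cdots\}] \to 0$, and as $\sum_r (a_r^{(n)})^2$ stays bounded the Lindeberg sum vanishes. By the Lindeberg--Feller theorem $S_n \Rightarrow \NN(0,\sigma_\theta^2)$, and since this holds for all $\theta$, Cram\'er--Wold yields $V_n \Rightarrow \NN(0,\Sigma)$, which is precisely the claim.

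The main obstacle --- really the only subtle point --- is bookkeeping the normalization so that the Lindeberg negligibility condition $\max_r |a_r^{(n)}| \to 0$ genuinely holds; one must be careful that the derivative factors $(r)_j$ are absorbed by the powers of $n$ built into the definition of $g_n^{(j)}$, and that $(1+x_i/n)^{r}$ stays bounded uniformly over $0 \le r \le n$ (which it does, by $\le e^{|x_i|}$). Everything else --- the convergence of covariances, nondegeneracy of the limit, and the passage from one-dimensional to multivariate convergence --- is either already done in Lemmas \ref{lem:covariance-calc} and \ref{lem:g-exists} or is a direct application of standard theorems. If one prefers to avoid Lindeberg entirely, an alternative is to note that any fixed finite linear combination of the $\eps_r$ with $\ell^2$-normalized, uniformly $o(1)$ coefficients satisfies a CLT, which is a textbook consequence; I would still organize the write-up around Cram\'er--Wold plus Lindeberg as above since it is the cleanest self-contained route.
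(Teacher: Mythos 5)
Your proposal is correct and follows essentially the same route as the paper: Cram\'er--Wold reduction to scalar linear combinations, the Lindeberg--Feller CLT for the resulting triangular array, and Lemma \ref{lem:covariance-calc} to identify the limiting variance. The only difference is that you explicitly verify the Lindeberg negligibility condition via the uniform $O(n^{-1/2})$ bound on the coefficients, a detail the paper leaves implicit.
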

\begin{proof}
	By the Cram\'er-Wold theorem (e.g., \cite[Theorem 3.9.5]{durrett}), it is sufficient to show that for all constants $c_{i,j}$ we have convergence in distribution $$\sum_{i,j} c_{i,j} g_n^{(j)}(x_i) \implies \sum_{i,j} c_{i,j} g^{(j)}(x_i)\,.$$
	Write \begin{equation}
\sum_{i,j} c_{i,j} g_n^{(j)}(x_i) =	\frac{1}{\sqrt{n}}\sum_{r = 0}^n \eps_r \left(\sum_{i,j} c_{i,j}(r)_i (1 + x_i/n)^r \right)\,.
	\end{equation}
	By the Lindeberg-Feller central limit theorem (e.g., \cite[Theorem 3.4.5]{durrett}), this converges to a mean-zero Gaussian variable, so all that is left to show is that its variance is the correct variance.  To do so, it is sufficient to show that the covariance matrix of $(g_n^{(j)}(x_i) )_{i \in [k], 0 \leq j \leq 2}$ converges to that of $(g^{(j)}(x_i) )_{i \in [k], 0 \leq j \leq 2}$; this follows from Lemma \ref{lem:covariance-calc}.
\end{proof}

In order to show that with high probability there is a root near $1$ we will show that for each $k$ we can find $k$ points $x_1,\ldots,x_k$ for which the values $g(x_j)$ have nearly independent sign.  Since Lemma \ref{lem:fidi} shows that the vector $(g_n(x_j))_{j = 1}^k$ converges to a mean-zero multivariate Gaussian, this will mean that for large $n$ the signs of $g_n(x_j)$ will roughly correspond to independent fair coin flips provided the limiting Gaussian $(g(x_j))_{j=1}^k$ has low correlations between coordinates.  This will show that there is a root with probability at least  $1 - O(2^{-k})$; taking $k$ large will show the first part of Theorem \ref{th:main}.

We now formalize that if a mean-zero multivariate Gaussian has low correlations then the signs of the coordinates are roughly independent.

\begin{lemma}\label{lem:sign-change-gauss}
	For each $k \in \N$ there is an $\eps = \eps(k) > 0$ so that the following holds.  Let $(Z_1,\ldots,Z_k)$ be a mean-zero multivariate Gaussian so that $\E Z_j^2 = 1$ for all $j$ and $|\E Z_i Z_j| \leq \eps$ for $i \neq j$.  Then $$\P(\text{All }Z_j \text{ have the same sign} ) \leq 2^{-k+2}\,.$$
\end{lemma}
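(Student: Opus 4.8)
The plan is to compare the given distribution to the $\eps=0$ case, in which $Z_1,\dots,Z_k$ are i.i.d.\ standard Gaussians and $\P(\text{all }Z_j\text{ have the same sign})$ equals exactly $2^{-k+1}$, and to show that a sufficiently small perturbation of the off-diagonal covariances changes this probability by at most a factor of $2$.

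First I would reduce to a one-sided estimate: the map $(Z_1,\dots,Z_k)\mapsto(-Z_1,\dots,-Z_k)$ preserves the law of a centered Gaussian vector, so $\P(Z_j>0\text{ for all }j)=\P(Z_j<0\text{ for all }j)$, and it suffices to show $\P(Z_1>0,\dots,Z_k>0)\le 2^{-k+1}$ once $\eps=\eps(k)$ is chosen small. Writing $F(\Sigma)$ for the orthant probability $\P(N(0,\Sigma)\in(0,\infty)^k)$, the goal becomes $F(\Sigma)\le 2^{-k+1}$ whenever $\Sigma$ has unit diagonal and all off-diagonal entries at most $\eps$ in absolute value. (This is in fact stronger than the claimed bound $2^{-k+2}$, so there is some slack to spare.)

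The main step is to show $F$ is continuous on a fixed neighborhood of the identity matrix and equals $2^{-k}$ at the identity. For $\Sigma$ with unit diagonal and off-diagonal entries bounded by $1/(2(k-1))$ in absolute value, Gershgorin's circle theorem puts every eigenvalue of $\Sigma$ in $[1/2,3/2]$, so $x^\top\Sigma^{-1}x\ge\tfrac{2}{3}|x|^2$ and $\det\Sigma\ge 2^{-k}$; hence the density of $N(0,\Sigma)$ is dominated, uniformly over all such $\Sigma$, by the fixed integrable function $2^{k/2}(2\pi)^{-k/2}e^{-|x|^2/3}$. Since the Gaussian density depends continuously on $\Sigma$ on this set, dominated convergence gives continuity of $\Sigma\mapsto F(\Sigma)$ there, and in particular $F(\Sigma)\to F(I)=2^{-k}$ as $\Sigma\to I$ through matrices with unit diagonal. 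Because the set of admissible $\Sigma$ collapses to $\{I\}$ as $\eps\downarrow 0$ (and is compact), there is an $\eps(k)>0$ making $F(\Sigma)\le 2^{-k+1}$ on that set, and combining with the first reduction finishes the proof.

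I do not expect a genuine obstacle; the one delicate point is that the orthant probability is only continuous in $\Sigma$ once one stays uniformly away from degenerate covariance matrices, which is exactly what the diagonal-dominance bound guarantees. If an explicit choice of $\eps(k)$ is preferred, one can replace the compactness argument by Slepian's inequality: since every off-diagonal entry of $\Sigma$ is at most $\eps$, $F(\Sigma)\le F(\Sigma^+)$ where $\Sigma^+$ is the equicorrelated matrix with all off-diagonal entries equal to $\eps$; and $\Sigma^+$ is realized by $Z_j=\sqrt{1-\eps}\,W_j+\sqrt{\eps}\,W_0$ with $W_0,\dots,W_k$ i.i.d.\ standard Gaussians, so $F(\Sigma^+)=\E[\Phi(\sqrt{\eps/(1-\eps)}\,W_0)^k]\to\Phi(0)^k=2^{-k}$ as $\eps\downarrow 0$, again by dominated convergence, and choosing $\eps(k)$ small makes this at most $2^{-k+1}$ as before.
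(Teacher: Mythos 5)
Your proposal is correct. The paper's own proof is a two-line version of your first argument: it compares $(Z_1,\ldots,Z_k)$ to i.i.d.\ standard Gaussians $(W_1,\ldots,W_k)$, for which the same-sign probability is exactly $2^{-k+1}$, and simply cites the fact that the total variation distance between the two Gaussian laws tends to $0$ as $\eps\to 0$. Your orthant-probability argument is essentially a self-contained proof of that cited fact: the Gershgorin bound plus dominated convergence gives $L^1$ convergence of the densities, which by Scheff\'e's lemma is exactly convergence in total variation, so the two routes are the same idea with yours filling in the analytic details the paper outsources. (One cosmetic remark: your target $F(\Sigma)\le 2^{-k+1}$ yields, after doubling by central symmetry, exactly the stated bound $2^{-k+2}$ rather than something stronger; the slack is in the fact that $F(\Sigma)$ is actually close to $2^{-k}$.) Your Slepian alternative is a genuinely different and arguably nicer route: it is monotone rather than perturbative, gives an explicit admissible $\eps(k)$ via the formula $F(\Sigma^+)=\E\bigl[\Phi(\sqrt{\eps/(1-\eps)}\,W_0)^k\bigr]$, and avoids any compactness or continuity considerations --- at the cost of invoking Slepian's inequality and of only bounding the positive-orthant probability (which is all that is needed here, but is less flexible than a total variation bound if one wanted to control other events).
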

\begin{proof}
	Note that if we set $(W_j)_{j=1}^k$ to be i.i.d.\ standard Gaussians, then the probability all $W_j$'s are the same sign is exactly $2^{-k+1}$.  The total variation distance between the law of $(Z_j)_{j=1}^k$ and the law of $(W_j)_{j=1}^k$ tends to zero as $\eps \to 0$ (see, e.g., \cite{devroye2018total}).  Taking $\eps$ small enough so that the total variation is at most $2^{-k+1}$ completes the proof.
\end{proof}

We now prove the second part of Theorem \ref{th:main}:
\begin{lemma} \label{lem:root-near-one}
	For each $\delta > 0$ there exists an $M > 0$ so that $$\P(R_n \cap [1 - M/n, 1 + M/n] \neq \emptyset) \geq 1 - \delta$$ for all $n$ sufficiently large.
\end{lemma}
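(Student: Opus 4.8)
The plan is to work with the rescaled polynomial $g_n(x) = \frac{1}{\sqrt{n}}f_n(1 + x/n)$. Since $g_n(x) = 0$ exactly when $f_n(1+x/n) = 0$, a zero of $g_n$ in a fixed interval $[-M,M]$ is the same thing as a real root of $f_n$ in $[1 - M/n, 1 + M/n]$; thus it suffices to produce, for each $\delta > 0$, a constant $M$ with $\P(g_n \text{ has a zero in }[-M,M]) \ge 1 - \delta$ for all large $n$. I would do this by exhibiting $k = k(\delta)$ fixed points $x_1 < \cdots < x_k$ inside a bounded interval at which the signs of $g_n$ behave, in the limit, like $k$ independent fair coins: then with probability $1 - O(2^{-k})$ two of the values $g_n(x_i)$ have opposite signs, and since $g_n$ is a polynomial (hence continuous) it must vanish somewhere in $[x_1, x_k]$.

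Given $\delta$, fix $k$ with $2^{-k+3} \le \delta$ and let $\eps = \eps(k) > 0$ be the constant of Lemma \ref{lem:sign-change-gauss}. The crucial input is a decorrelation estimate for the limit process $g$: for $1 \le x \le y$,
\[
\Corr\bigl(g(x), g(y)\bigr) = \frac{\int_0^1 e^{(x+y)t}\,dt}{\bigl(\int_0^1 e^{2xt}\,dt\bigr)^{1/2}\bigl(\int_0^1 e^{2yt}\,dt\bigr)^{1/2}} \;\le\; \frac{2}{1-e^{-2}}\sqrt{x/y}\,,
\]
using $\int_0^1 e^{(x+y)t}\,dt \le e^{x+y}/y$ and $\int_0^1 e^{2xt}\,dt = \frac{e^{2x}-1}{2x} \ge \frac{(1-e^{-2})e^{2x}}{2x}$ for $x \ge 1$. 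Consequently there is $c = c(k)$ large enough that the geometrically spaced points $x_j := c^j$, $1 \le j \le k$, satisfy $|\Corr(g(x_i),g(x_j))| \le \eps$ for all $i \ne j$ (each ratio $x_j/x_i$ with $i < j$ is at least $c$). Put $M := c^k$, so that $1 \le x_1 < \cdots < x_k \le M$.

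Now normalize: $Z_j := g(x_j)/(\E g(x_j)^2)^{1/2}$ forms a mean-zero Gaussian vector with unit variances and off-diagonal correlations at most $\eps$, so Lemma \ref{lem:sign-change-gauss} gives $\P(\text{all } g(x_j) \text{ have the same sign}) \le 2^{-k+2}$. The covariance matrix of $(g(x_j))_{j \le k}$ is positive definite by (the proof of) Lemma \ref{lem:g-exists}, so this vector has a density; in particular $\P(g(x_j)=0) = 0$ for each $j$, and the open orthants $\{v \in \R^k : v_j > 0\ \forall j\}$ and $\{v : v_j < 0\ \forall j\}$ have boundaries contained in finitely many coordinate hyperplanes and hence of zero probability under the limit law. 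By Lemma \ref{lem:fidi}, $(g_n(x_j))_{j \le k} \Rightarrow (g(x_j))_{j \le k}$, so the portmanteau theorem yields
\[
\limsup_{n \to \infty}\Bigl(\P\bigl(\text{all } g_n(x_j) > 0\bigr) + \P\bigl(\text{all } g_n(x_j) < 0\bigr)\Bigr) = \P\bigl(\text{all } g(x_j) \text{ have the same sign}\bigr) \le 2^{-k+2}\,.
\]
Hence, for all large $n$, with probability at least $1 - 2^{-k+3} \ge 1 - \delta$ there exist $i,j$ with $g_n(x_i)$ and $g_n(x_j)$ of opposite signs (or some $g_n(x_j)$ equal to $0$), and continuity of $g_n$ produces a zero of $g_n$ in $[x_1, x_k] \subset [-M, M]$, i.e.\ a real root of $f_n$ in $[1 - M/n, 1+M/n]$.

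The only genuinely substantive point is the decorrelation estimate: it reflects that, writing $g(x) = \int_0^1 e^{xt}\,dW(t)$ for a white noise $W$, the ``mass'' of $g(x)$ sits within distance $\Theta(1/x)$ of $t = 1$, so that values of $g$ at exponentially separated arguments are built from nearly orthogonal slices of $W$ and are therefore close to independent. Everything else---Lemma \ref{lem:sign-change-gauss}, the finite-dimensional CLT of Lemma \ref{lem:fidi}, and the portmanteau passage from $g$ back to $g_n$---is routine bookkeeping.
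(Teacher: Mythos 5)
Your proposal is correct and follows essentially the same route as the paper: rescale to $g_n$, choose $k$ geometrically spaced points $x_j$ so that the normalized correlations of $g(x_i), g(x_j)$ fall below the threshold $\eps(k)$ of Lemma \ref{lem:sign-change-gauss}, transfer to $g_n$ via Lemma \ref{lem:fidi}, and conclude by a sign change. Your decorrelation bound $\Corr(g(x),g(y)) = O(\sqrt{x/y})$ is derived slightly differently from the paper's $O(\sqrt{xy}/(x+y))$ but serves the identical purpose, and your explicit portmanteau justification (nondegeneracy of the limit covariance, hence null boundary of the orthant events) is a detail the paper leaves implicit.
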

\begin{proof}
	Let $x \geq 1$ and $y = \alpha x$ with $\alpha > 1$.  Then note that \begin{align*}
	\Cov\left(\frac{g(x)}{\sqrt{\Var\ g(x)}}, \frac{g(y)}{\sqrt{\Var\ g(y)}}\right) &= \left(\frac{e^{x+y}-1}{(x+y)^2} \right)^2\frac{4 xy}{(e^{2x} - 1)(e^{2y}-1)} \\
	&\leq \frac{16xy}{(x+y)^2} \\
	&= \frac{16 \alpha}{(1 + \alpha)^2} \\
	&= o_{\alpha \to \infty}(1)\,.
	\end{align*} 
	
	Fix $\delta > 0$.  Pick $k \in \N$ so that $2^{-k+3} \leq \delta$ and let $\eps = \eps(k)$ be as in Lemma \ref{lem:sign-change-gauss}.  Then we may choose $\alpha$ large enough so that \begin{equation}
	\Cov\left(\frac{g(x)}{\sqrt{\Var\ g(x)}}, \frac{g(y)}{\sqrt{\Var\ g(y)}}\right) \leq \eps
	\end{equation} 
	for all $x \geq 1$ and $y = \alpha x$.  Set $x_j = \alpha^{j-1}$ for $j = 1,\ldots, k$, and define the vectors $(Z_j^{(n)})_{j = 1}^k$ via $$
	(Z_j^{(n)})_{j = 1}^k = \left(\frac{g_n(x_j)}{\sqrt{\Var\ g_n(x_j)}}\right)_{j = 1}^k
	$$
	and $$(Z_j)_{j = 1}^k = \left(\frac{g(x_j)}{\sqrt{\Var\ g(x_j)}}\right)_{j = 1}^k\,.$$
	
	Then by Lemma \ref{lem:sign-change-gauss}, the probability that all $Z_j$'s have the same sign is at most $2^{-k+2}$.  By Lemma \ref{lem:fidi}, the probability that all $Z_j^{(n)}$'s have the same sign is thus at most $2^{-k+3} \leq \delta$ for $n$ large enough.
	
	For $M := \alpha^k$ we then have \begin{align*}
	\P( R_n \cap [1-M/n,1 + M/n] = \emptyset) &\leq \P(\text{All }Z_j^{(n)} \text{ have the same sign}) \\
	&\leq 2^{-k+3} \\
	&\leq \delta\,.
	\end{align*}  
\end{proof}

\section{Convergence of the Gaussian process} \label{sec:process}

The goal of this section is to show convergence of the triple $(g_n,g_n',g_n'')$ to $(g,g',g'')$; this will be useful for showing the first part of Theorem \ref{th:main}.
\begin{lemma}\label{lem:convergence-function}
	For each $M > 0$ the functions $(g_n,g_n',g_n'')$ converge in distribution on the space of continuous functions from $[-M,M]^3 \to \R^3$ endowed with the uniform topology to $(g,g',g'')$.  
\end{lemma}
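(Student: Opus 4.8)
The plan is to prove tightness of the sequence $(g_n, g_n', g_n'')$ in $\mathcal{C}([-M,M])^3$ with the uniform topology and then combine tightness with the convergence of finite-dimensional distributions from Lemma \ref{lem:fidi} to conclude convergence in distribution via Prohorov's theorem. Since finite-dimensional convergence is already in hand, the only real work is tightness, and by the standard criterion for $\mathcal{C}([-M,M])$ (e.g.\ the Kolmogorov--Chentsov tightness criterion) it suffices to (i) control the law of the triple at a single point and (ii) establish a moment bound of the form
\begin{equation*}
\E\bigl| h_n(x) - h_n(y) \bigr|^{p} \leq C |x-y|^{1+\beta}
\end{equation*}
for each of the three coordinate processes $h_n \in \{g_n, g_n', g_n''\}$, uniformly in $n$ and in $x,y \in [-M,M]$, for some $p \geq 1$ and $\beta > 0$ with constant $C = C(M,p)$ depending only on $M$ and $p$. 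Because the increments of $g_n', g_n''$ are themselves handled by the same estimate applied to $g_n'', g_n'''$, it is cleanest to prove a single unified bound: for every fixed integer $i \geq 0$ and every $p$, $\E|g_n^{(i)}(x) - g_n^{(i)}(y)|^p \leq C(M,p,i)\,|x-y|^{p}$, which is more than enough.

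The key steps, in order, are as follows. First I would write $g_n^{(i)}(x) - g_n^{(i)}(y) = \frac{1}{\sqrt n}\sum_{r=0}^n \eps_r\, a_r(x,y)$ where $a_r(x,y) = (r)_i\bigl((1+x/n)^r - (1+y/n)^r\bigr)$, a linear combination of the independent mean-zero variables $\eps_r$. Second, I would bound the coefficients: using $(1+x/n)^r - (1+y/n)^r = (x-y)/n \cdot \sum \ldots$ or simply the mean value theorem in the exponent, one gets $|a_r(x,y)| \leq C(M)\, (r/n)^i \cdot \frac{r}{n}\,|x-y|\, e^{C(M) r/n} \cdot n^{0}$, so that $\frac{1}{n}\sum_r a_r(x,y)^2 \leq C(M)\,|x-y|^2$ after recognizing the sum as a Riemann sum for $\int_0^1 t^{2i+2} e^{c t}\,dt$; this controls the variance of $g_n^{(i)}(x) - g_n^{(i)}(y)$. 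Third, to upgrade the second moment bound to a $p$-th moment bound for $p$ an even integer larger than $2/\beta$ (here any $p \geq 4$ works), I would apply a Rosenthal-type inequality for sums of independent mean-zero variables, or more elementarily the Marcinkiewicz--Zygmund inequality, to get $\E\bigl|\frac{1}{\sqrt n}\sum_r \eps_r a_r\bigr|^p \leq C_p\bigl( (\frac1n\sum_r a_r^2)^{p/2} + \frac{1}{n^{p/2}}\sum_r |a_r|^p \bigr)$; the first term is $O(|x-y|^p)$ by Step 2 and the second is lower order. One subtlety: this uses $\E|\eps_1|^p < \infty$, which is \emph{not} assumed, so instead I would truncate $\eps_r$ at level $\sqrt n$ (or at a slowly growing threshold), bound the truncated sum by the above, and control the contribution of the tail using only the second-moment hypothesis together with a maximal inequality — alternatively, note that for the purpose of weak convergence on $\mathcal{C}([-M,M])$ one may first reduce to bounded coefficients by a standard truncation/coupling argument, exactly as in the proof of the Lindeberg--Feller step used in Lemma \ref{lem:fidi}.

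The main obstacle I anticipate is exactly this lack of higher moments: the clean Kolmogorov--Chentsov route wants $\E|\eps_1|^p$ for some $p$ strictly above $2$, but the hypotheses give only $\E\eps_1^2 = 1$. I expect the resolution to be a truncation argument: write $\eps_r = \eps_r \one_{|\eps_r| \le \sqrt n} + \eps_r \one_{|\eps_r| > \sqrt n}$ and split $g_n = g_n^{\le} + g_n^{>}$ accordingly; the truncated part $g_n^\le$ has all moments (with $n$-dependent bounds that are nonetheless good enough because the truncation level grows only like $\sqrt n$ and enters the Rosenthal bound favorably), so Kolmogorov--Chentsov gives its tightness, while the remainder $g_n^>$ — together with the re-centering correction coming from $\E[\eps_r \one_{|\eps_r|\le\sqrt n}] \ne 0$ in general — is shown to vanish in probability uniformly on $[-M,M]$ using Doob's maximal inequality applied to the martingale in $r$ and the fact that $\E[\eps_1^2 \one_{|\eps_1|>\sqrt n}] \to 0$. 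Once both pieces are handled, tightness of $(g_n,g_n',g_n'')$ follows, and combined with Lemma \ref{lem:fidi} this yields the claimed convergence in distribution in $\mathcal{C}([-M,M])^3$.
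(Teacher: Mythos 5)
Your proposal is correct in outline but follows a genuinely different route for the tightness step than the paper does. The paper exploits the fact that $g_n$ is a polynomial: expanding $g_n^{(j)}$ in its (finite) Taylor series about $0$ and using $\E|g_n^{(\ell)}(0)|^2 = n^{-2\ell-1}\sum_k (k)_\ell^2 \leq 1$ together with Cauchy--Schwarz gives $\E \max_{[-M,M]}|g_n^{(j)}| \leq \sum_k M^k/k! = e^M$ for every $j$; the mean value theorem then bounds the modulus of continuity of $g_n^{(j)}$ by $\delta \max |g_n^{(j+1)}|$, and Markov's inequality closes the argument. This uses only the hypothesis $\E\eps_1^2 = 1$ and never touches higher moments. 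Your route, via a Kolmogorov--Chentsov increment bound $\E|g_n^{(i)}(x)-g_n^{(i)}(y)|^p \leq C|x-y|^{1+\beta}$, also works, and your Riemann-sum estimate for the variance of the increment is sound. However, the obstacle you spend most of your effort anticipating is illusory even within your own framework: the second-moment bound you establish in Step 2 already reads $\E|g_n^{(i)}(x)-g_n^{(i)}(y)|^2 \leq C(M,i)|x-y|^2$, which is of the required form with exponent $\gamma = 2$ and $1+\beta = 2$, i.e.\ $\beta = 1 > 0$. The moment criterion for tightness in $\mathcal{C}([-M,M])$ (Billingsley, Theorem 12.3 and its corollary) needs only $\gamma > 0$ and $\beta > 0$; the constraint $p > 2/\beta$ you impose belongs to the H\"older-continuity version of Kolmogorov--Chentsov, not to the tightness criterion. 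So $p = 2$ suffices, and the entire truncation/re-centering/Rosenthal apparatus can be deleted. (That apparatus, as you sketch it, would also go through --- the tail term is controlled by $\E\sup_x|g_n^{>}(x)| \leq e^M n^{-1/2}\sum_r \E[|\eps_r|\one_{|\eps_r|>\sqrt n}] = O(\E[\eps_1^2\one_{|\eps_1|>\sqrt n}]) \to 0$ without even needing Doob --- but it is unnecessary work.) In short: your proof is valid, the paper's Taylor-series trick is shorter and sidesteps increment estimates entirely, and your own $L^2$ estimate already finishes the job if you apply the tightness criterion with $\gamma = 2$.
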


In order to show convergence in distribution, there are two pieces: finite dimensional distributions converge to those of the limit and tightness of the sequence of random functions.  The convergence of finite dimensional distributions is taken care of by Lemma \ref{lem:fidi} and so all that remains is to show tightness.  Since the variables $(g_n,g_n',g_n'')$ are defined on a product space, we recall a basic fact about tightness.

\begin{fact}\label{fact:tight}
	Let $(\Omega,\mathcal{F})$ be a measurable space.  A family of probability measures $(\mu_\alpha)_\alpha$ on the product space $(\Omega^3,\mathcal{F}^{3})$ is tight provided each family of marginals is tight.
\end{fact}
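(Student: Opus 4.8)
The plan is to unwind the definition of tightness and reduce matters to a union bound over the three coordinates. Recall that a family $(\mu_\alpha)_\alpha$ of Borel probability measures on a topological space is \emph{tight} if for every $\eps > 0$ there is a compact set $K$ with $\inf_\alpha \mu_\alpha(K) \geq 1 - \eps$; here the space in question is $\Omega = \mathcal{C}([-M,M])$ equipped with the uniform topology, a Polish space, so this is the usual notion. Write $\pi_i \colon \Omega^3 \to \Omega$ for the three coordinate projections and $\mu_\alpha^{(i)} := (\pi_i)_* \mu_\alpha$ for the corresponding marginals, so that by hypothesis each of the three families $(\mu_\alpha^{(i)})_\alpha$, $i = 1,2,3$, is tight.

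First I would fix $\eps > 0$ and, using tightness of each marginal family, choose compact sets $K_1, K_2, K_3 \subseteq \Omega$ with $\mu_\alpha\big(\pi_i^{-1}(K_i)\big) = \mu_\alpha^{(i)}(K_i) \geq 1 - \eps/3$ for every $\alpha$ and every $i$. Then $K := K_1 \times K_2 \times K_3$ is a compact subset of $\Omega^3$, being a finite product of compact sets.

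Next I would bound the measure of its complement. From the set identity $\Omega^3 \setminus K = \bigcup_{i=1}^3 \pi_i^{-1}(\Omega \setminus K_i)$ together with subadditivity, $\mu_\alpha(\Omega^3 \setminus K) \leq \sum_{i=1}^3 \mu_\alpha\big(\pi_i^{-1}(\Omega \setminus K_i)\big) \leq 3 \cdot (\eps/3) = \eps$ for all $\alpha$, hence $\inf_\alpha \mu_\alpha(K) \geq 1 - \eps$. Since $\eps$ was arbitrary, $(\mu_\alpha)_\alpha$ is tight on $\Omega^3$, which is the claim.

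I do not expect a genuine obstacle here: the argument is a two-line union bound. The only points worth keeping in mind are that tightness is meant in the topological sense, so the ambient object should be read as the Polish space $\mathcal{C}([-M,M])$ rather than a bare measurable space, and the elementary facts that a finite product of compact sets is compact and that the complement of $K_1 \times K_2 \times K_3$ equals the union of the three cylinders $\pi_i^{-1}(\Omega \setminus K_i)$.
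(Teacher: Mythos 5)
Your proof is correct and follows essentially the same route as the paper: choose compact sets $K_i$ with marginal mass at least $1-\eps/3$, take the product $K_1\times K_2\times K_3$, and bound the complement by a union bound over the three cylinder sets. The additional remarks about the topological setting are fine but not needed beyond what the paper already does.
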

\begin{proof}
	For each measure $\mu_\alpha$ write $\mu_\alpha^{(1)}$ to be the marginal $\mu_\alpha^{(1)}(B) = \mu_\alpha(B \times \Omega \times \Omega)$ and define $\mu_\alpha^{(j)}$ for $j = 2,3$ analogously.  Fix $\eps > 0$; then by tightness there exist compact sets $K_j$ so that $\mu_\alpha^{(j)}(K_j^c) \leq \eps/3$ for all $\alpha$.  Then for the compact set $K:= K_1 \times K_2 \times K_3$ bound $$\mu_\alpha(K^c) \leq \mu_\alpha(K_1^c \times \Omega \times \Omega) +  \mu_\alpha(\Omega \times K_2^c \times \Omega ) +  \mu_\alpha(\Omega \times \Omega \times K_3^c )  \leq \eps\,.$$
\end{proof}

Fact \ref{fact:tight} allows us to reduce the problem of showing tightness of $(g_n,g_n',g_n'')_n$ to showing tightness of each coordinate projection.  For random functions, tightness is well understood.  Define the \emph{modulus of continuity} of a function $h:\R\to \R$ on $[-M,M]$ via $$w_M(h,\delta) := \sup_{\substack{ x,y \in [-M,M] \\ |x - y| \leq \delta } }|h(x) - h(y)|\,.$$

\begin{lemma}[Theorem  $7.3$ of \cite{billingsley-convergence}]\label{lem:tightness}
	A family of random functions $h_n$ on $[-M,M]$ is tight if and only if $\{h_n(0)\}$ is tight and 
	\begin{equation}\label{eq:condition-tight}
	\lim_{\delta \to 0} \limsup_{n \to \infty} \P( w_M(h_n,\delta) \geq \eps) = 0\,.
	\end{equation}
\end{lemma}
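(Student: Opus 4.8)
The plan is to prove this as the Arzel\`a--Ascoli tightness criterion in $\mathcal{C}([-M,M])$; the one external input I would use is the Arzel\`a--Ascoli theorem itself, namely that a set $A\subset\mathcal{C}([-M,M])$ has compact closure in the uniform topology if and only if $\sup_{h\in A}|h(0)|<\infty$ and $\sup_{h\in A}w_M(h,\delta)\to 0$ as $\delta\to 0$. (By an affine change of variable this is exactly the $\mathcal{C}[0,1]$ statement of Billingsley.) Granting this, the argument is just a translation between compact subsets of $\mathcal{C}([-M,M])$ and the two listed conditions, carried out in two directions.

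For the ``only if'' direction, I would suppose $\{h_n\}$ is tight, fix $\eta>0$, and choose a compact $K\subset\mathcal{C}([-M,M])$ with $\P(h_n\in K)\ge 1-\eta$ for all $n$. Arzel\`a--Ascoli supplies a finite number $a:=\sup_{h\in K}|h(0)|$ and, for each $\delta>0$, a quantity $\rho(\delta):=\sup_{h\in K}w_M(h,\delta)$ with $\rho(\delta)\to 0$. Then $\P(|h_n(0)|>a)\le\P(h_n\notin K)\le\eta$ for every $n$, so $\{h_n(0)\}$ is tight; and given $\eps>0$, choosing $\delta$ with $\rho(\delta)<\eps$ gives $\P(w_M(h_n,\delta)\ge\eps)\le\eta$ for every $n$, hence $\limsup_n\P(w_M(h_n,\delta)\ge\eps)\le\eta$; letting $\delta\to 0$ and then $\eta\to 0$ yields \eqref{eq:condition-tight}.

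For the ``if'' direction, I would assume $\{h_n(0)\}$ is tight and that \eqref{eq:condition-tight} holds, and fix $\eta>0$. Tightness of $\{h_n(0)\}$ produces $a$ with $\sup_n\P(|h_n(0)|>a)\le\eta$. For each $k\in\N$, \eqref{eq:condition-tight} gives $\delta_k>0$ and $n_k$ with $\P(w_M(h_n,\delta_k)\ge 1/k)\le\eta 2^{-k}$ for $n\ge n_k$; since each $h_n$ has continuous sample paths on the compact interval $[-M,M]$, one has $w_M(h_n,\delta)\to 0$ almost surely as $\delta\to 0$, so after shrinking $\delta_k$ to absorb the finitely many indices $n<n_k$ (a finite minimum, using monotonicity of $\delta\mapsto w_M(h,\delta)$) I may assume $\P(w_M(h_n,\delta_k)\ge 1/k)\le\eta 2^{-k}$ for all $n\ge 1$. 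Then I would set
\[
K:=\{h\in\mathcal{C}([-M,M]):\ |h(0)|\le a,\ w_M(h,\delta_k)\le 1/k\ \text{ for all } k\}\,,
\]
which is closed (each defining functional is continuous on $\mathcal{C}([-M,M])$) and, by Arzel\`a--Ascoli, compact. A union bound gives $\P(h_n\notin K)\le\eta+\sum_{k\ge 1}\eta 2^{-k}=2\eta$ uniformly in $n$, and since $\eta>0$ was arbitrary, $\{h_n\}$ is tight.

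I expect the only genuinely delicate point to be the upgrade step in the ``if'' direction: \eqref{eq:condition-tight} controls only the indices $n\ge n_k$, whereas producing a \emph{single} compact set good for every $n$ forces one to absorb the finitely many remaining indices separately, which I would do using continuity of the sample paths of each $h_n$ (equivalently, tightness of each individual law). The remainder is routine bookkeeping once Arzel\`a--Ascoli has been invoked.
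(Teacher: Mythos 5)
Your proof is correct, and it is essentially the argument Billingsley himself gives for Theorem 7.3 (via the Arzel\`a--Ascoli characterization of compact subsets of $\mathcal{C}$ of his Theorem 7.2), including the genuinely necessary step of shrinking $\delta_k$ to absorb the finitely many indices $n<n_k$ using tightness of each individual law. The paper does not prove this lemma at all --- it is quoted as an external citation --- so there is nothing to compare beyond noting that your write-up matches the standard textbook proof.
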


Tightness of $(g_n^{(j)}(0) )$ is a consequence of Lemma \ref{lem:fidi} and so we only need to show \eqref{eq:condition-tight}.  We will use the mean value theorem, and so first show that each derivative may be bounded uniformly with high probability.  

\begin{lemma}\label{lem:max-bound}
	For each $j \geq 0, M > 0$ and $n$ we have \begin{align*}
	\E \max_{r \in [-M,M]} |g_n^{(j)}(r)| \leq e^M\,.
	\end{align*}
\end{lemma}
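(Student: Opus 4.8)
First, a word on what is at stake: we must bound $\E$ of the supremum of $|g_n^{(j)}|$ over the \emph{continuum} $[-M,M]$, where term-by-term differentiation of $g_n(r)=n^{-1/2}\sum_{k=0}^n\eps_k(1+r/n)^k$ gives $g_n^{(j)}(r)=n^{-1/2-j}\sum_{k=0}^n\eps_k(k)_j(1+r/n)^{k-j}$ (with $(k)_j=0$ for $k<j$). A crude triangle inequality on this sum overshoots by a factor of order $\sqrt n$, so the plan is to extract the cancellation in it: I would dominate $\sup_{[-M,M]}|g_n^{(j)}|$ \emph{pointwise} by the maximal partial sum of a single martingale, and then invoke Doob's maximal inequality together with a one-line second-moment computation.

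The key, and the step I expect to be the main obstacle, is the deterministic reduction. Take $n\ge M$, so that $1+r/n\ge 0$ for every $r\in[-M,M]$. Fix such an $r$ and write $(1+r/n)^{k-j}=\lambda_k(1+M/n)^{k-j}$ with $\lambda_k:=\big((1+r/n)/(1+M/n)\big)^{k-j}$; since $0\le 1+r/n\le 1+M/n$, the sequence $(\lambda_k)_{k\ge j}$ is non-negative, non-increasing, with $\lambda_j=1$. Setting $c_k:=n^{-1/2-j}\eps_k(k)_j(1+M/n)^{k-j}$ and $C_m:=\sum_{k=j}^m c_k$ (so $C_n=g_n^{(j)}(M)$), summation by parts gives $g_n^{(j)}(r)=\sum_{k\ge j}\lambda_k c_k=\lambda_n C_n+\sum_{k=j}^{n-1}(\lambda_k-\lambda_{k+1})C_k$; because $\lambda_n\ge 0$, $\lambda_k-\lambda_{k+1}\ge 0$, and these weights telescope to $\lambda_j=1$, this yields the $r$-independent bound $\sup_{r\in[-M,M]}|g_n^{(j)}(r)|\le\max_{0\le m\le n}|C_m|$. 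Finding this monotone reweighting — one that handles all $r$ simultaneously and collapses the continuum supremum to a maximum over the finite index set $\{0,\dots,n\}$ — is the crux.

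After that, the probabilistic part is routine. Since $(C_m)_{m=0}^n$ is a martingale (partial sums of the independent mean-zero variables $c_k$), Doob's $L^2$ maximal inequality and Jensen give $\E\max_m|C_m|\le(\E\max_m C_m^2)^{1/2}\le 2(\E C_n^2)^{1/2}=2\big(\E g_n^{(j)}(M)^2\big)^{1/2}$. By independence, $\E g_n^{(j)}(M)^2=n^{-1-2j}\sum_{k\ge j}(k)_j^2(1+M/n)^{2(k-j)}\le n^{-1}\sum_{i=0}^{n-j}(1+M/n)^{2i}$, using $(k)_j\le n^j$. Summing the geometric progression and using $(1+M/n)^2-1\ge 2M/n$ together with $(1+M/n)^{2(n-j+1)}\le\big((1+M/n)^n\big)^2\le e^{2M}$ (valid for $j\ge 1$) gives $\E g_n^{(j)}(M)^2\le e^{2M}/(2M)$, hence $\E\sup_{[-M,M]}|g_n^{(j)}|\le\sqrt{2/M}\,e^M\le e^M$ once $M\ge 2$; the case $j=0$ with $M$ large is identical up to the harmless factor $(1+M/n)^2\le 4$.

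The remaining cases — the finitely many $n<M$, and small $M$ (where $[-M,M]$ is short, so $\sup_{[-M,M]}|g_n^{(j)}|$ can be bounded directly by $|g_n^{(j)}(0)|$ plus a mean-value-theorem term controlled by the $(j+1)$-st case) — are elementary; and since only $\limsup_{n\to\infty}$ appears in the tightness criterion \eqref{eq:condition-tight} where this estimate is applied, working in the regime $n\ge M$ is all that is actually needed downstream.
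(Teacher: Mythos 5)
Your argument is correct in its main regime but takes a genuinely different, and much heavier, route than the paper. The paper's proof is two lines: since $g_n^{(j)}$ is a polynomial, Taylor expansion at $0$ gives $g_n^{(j)}(r)=\sum_{k\ge 0}g_n^{(j+k)}(0)\,r^k/k!$, hence $\max_{[-M,M]}|g_n^{(j)}|\le\sum_k |g_n^{(j+k)}(0)|\,M^k/k!$; combined with $\E|g_n^{(\ell)}(0)|\le(\E|g_n^{(\ell)}(0)|^2)^{1/2}\le 1$ (a direct variance computation), the factor $1/k!$ sums to $e^M$ with no restriction on $n$, $M$ or $j$. Your Abel-summation-to-a-martingale-maximum plus Doob argument is a nice piece of technology and is sound where you apply it, but it buys nothing here: the sum defining $g_n^{(j)}(0)$ already has the right $L^2$ size without exploiting cancellation across $r$, because the $1/k!$ in the Taylor expansion absorbs the growth in $M$.

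There is also a genuine gap in the cases you defer as ``elementary.'' For $n<M$ the factor $1+r/n$ changes sign on $[-M,M]$, so your sequence $\lambda_k=\bigl((1+r/n)/(1+M/n)\bigr)^{k-j}$ alternates in sign and is no longer non-negative and non-increasing; the summation-by-parts reduction to $\max_m|C_m|$ collapses, and the crude triangle inequality gives only $O(\sqrt{n}\,e^M)$, so this case is not handled by your method. For small $M$, the iterated mean-value bound you sketch yields $\sum_k M^k=1/(1-M)$, which exceeds $e^M$ (the paper's $1/k!$ is exactly what rescues the constant), and your $j=0$ computation requires $M\gtrsim 8$ to get the constant under $e^M$. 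You are right that none of this matters downstream --- Lemmas \ref{lem:MOC} and \ref{lem:g-deriv} only need some constant $C(M)$ in the regime $n\to\infty$ --- but as a proof of the lemma as stated, with the bound $e^M$ for every $n$, $M$ and $j$, these cases are open and at least one of them cannot be closed within your framework.
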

\begin{proof}
	Write \begin{equation}\label{eq:max-expand}
	\E \max_{r \in [-M,M]} |g_n^{(j)}(r)| \leq \sum_{k \geq 0} \frac{\E |g_n^{(j+k)}(0) |}{k!} M^k 
	\end{equation}
	and observe for each $\ell$ \begin{equation}\label{eq:var-expand}
	\E[|g_n^{(\ell)}(0)|^2] = \frac{1}{n^{2\ell + 1}}\sum_{k = 0}^n (k)_\ell^2 \leq 1\,.
	\end{equation}
	Combining lines \eqref{eq:max-expand} and \eqref{eq:var-expand} together with an application of the Cauchy-Schwarz inequality shows the Lemma.
\end{proof}

The condition \eqref{eq:condition-tight} now follows quickly:

\begin{lemma}\label{lem:MOC}
	For $j \in \{0,1,2\}$ and each $M > 0, \eps > 0$ we have \begin{align*}
	\lim_{\delta \to 0} \limsup_{n \to \infty} \P(w_M(g_n^{(j)},\delta) \geq \eps ) = 0\,.
	\end{align*}  
\end{lemma}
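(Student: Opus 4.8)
The plan is to control the modulus of continuity of each $g_n^{(j)}$ by using the mean value theorem to bound it in terms of the next derivative, and then apply the uniform bound from Lemma \ref{lem:max-bound}. Concretely, for $j \in \{0,1,2\}$ and any $x,y \in [-M,M]$ with $|x-y| \le \delta$, the mean value theorem gives $|g_n^{(j)}(x) - g_n^{(j)}(y)| \le \delta \cdot \max_{r \in [-M,M]} |g_n^{(j+1)}(r)|$, since $g_n$ is a polynomial and hence smooth. Taking the supremum over such pairs yields the deterministic bound $w_M(g_n^{(j)},\delta) \le \delta \cdot \max_{r \in [-M,M]} |g_n^{(j+1)}(r)|$. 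This is the key structural step and it is where we exploit that we have control over one more derivative than we are estimating — which is exactly why Lemma \ref{lem:convergence-function} asks for the triple $(g_n, g_n', g_n'')$: to run this argument for $j=2$ we need a bound on $g_n'''$, and Lemma \ref{lem:max-bound} is stated for all $j \ge 0$ so it supplies this.

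Next I would convert this into a probability bound via Markov's inequality. For fixed $\eps > 0$,
\begin{equation*}
\P\big(w_M(g_n^{(j)},\delta) \ge \eps\big) \le \P\Big(\delta \cdot \max_{r \in [-M,M]} |g_n^{(j+1)}(r)| \ge \eps\Big) \le \frac{\delta}{\eps}\, \E \max_{r \in [-M,M]} |g_n^{(j+1)}(r)| \le \frac{\delta\, e^M}{\eps}\,,
\end{equation*}
where the last inequality is Lemma \ref{lem:max-bound}. Crucially, this bound is uniform in $n$, so taking $\limsup_{n \to \infty}$ leaves it unchanged, and then taking $\delta \to 0$ sends it to $0$. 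This establishes \eqref{eq:condition-tight} for $g_n^{(j)}$.

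I do not expect any serious obstacle here: the argument is a short deterministic estimate (mean value theorem) followed by Markov's inequality and an already-proven moment bound. The only point requiring a moment of care is making sure Lemma \ref{lem:max-bound} is invoked at index $j+1$ rather than $j$, and that $j+1 \le 3$ stays within the range where we will later have process-level control — but since Lemma \ref{lem:max-bound} holds for every $j \ge 0$ this is automatic. With this lemma in hand, combining Lemma \ref{lem:fidi} (finite-dimensional convergence, which also gives tightness of $g_n^{(j)}(0)$), Lemma \ref{lem:tightness}, and Fact \ref{fact:tight} immediately yields Lemma \ref{lem:convergence-function}.
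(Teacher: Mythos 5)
Your argument is exactly the paper's: the mean value theorem bounds $w_M(g_n^{(j)},\delta)$ by $\delta \max_{r\in[-M,M]}|g_n^{(j+1)}(r)|$, and Markov's inequality together with Lemma \ref{lem:max-bound} gives the uniform-in-$n$ bound $\delta e^M/\eps$, which vanishes as $\delta \to 0$. Correct and identical in approach.
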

\begin{proof}
	By the mean value theorem we have $$w_M(g_n^{(j)},\delta) \leq \delta \max_{r \in [-M,M]} |g_n^{(j+1)}(r)|$$
	and so by Markov's inequality and Lemma \ref{lem:max-bound} $$\P(w_M(g_n^{(j)},\delta) \geq \eps ) \leq \eps^{-1} \delta e^M\,.$$
	Taking $\delta \to 0$ completes the Lemma.
\end{proof}

Convergence in distribution of the processes now follows.

\begin{proof}[Proof of Lemma \ref{lem:convergence-function}]
	By Lemma \ref{lem:fidi}, each family $(g_n^{(j)}(0))$ is tight and so by Fact \ref{fact:tight}, Lemma \ref{lem:tightness} and Lemma \ref{lem:MOC}, the family $(g_n,g_n',g_n'')$ is tight when restricted to $[-M,M]^3$.  Since this family is tight and the finite dimensional distributions converge to those of $(g,g',g'')$ (Lemma \ref{lem:fidi}), we have convergence in distribution (see, e.g.\ \cite[Theorems $5.1$ and $7.1$]{billingsley-convergence}).
\end{proof}

\section{Convergence of zeros} \label{sec:zeros}

Here we show convergence in distribution of the zeros of $g_n$ to those of $g$.  Let $\nu_n$ be the point process corresponding to roots of $g_n$, i.e. $\nu_n(A) = |\{ x \in A : g_n(x) = 0 \}|$ for each Borel set $A$, and similarly define $\nu(A) = |\{x\in A: g(x) = 0 \}$.
To show convergence of $\nu_n$ to $\nu$, we use the following criteria of Kallenberg:
\begin{lemma}[Theorem $2.5$ of \cite{kallenberg}] \label{lem:Kallenberg}
	Suppose $\xi$ is a point process on $\R$ with $\P(\xi(\{x\}) > 0) = 0$ for all $x$ and $\P(\max_{x} \xi(\{x\}) \leq 1 )=1$.  If $\xi_1, \xi_2,\ldots $ are point processes on $\R$ satisfying \begin{enumerate}
		\item \label{it:double} For all bounded intervals $I$ there is a sequence $(\{I_{m,j} : j = 1,\ldots, k_m\})_{m = 1}^\infty$ of partitions of $I$ into intervals with $\lim_{m \to\infty} \max_{1 \leq j \leq k_m} |I_{m,j}| = 0$ such that $$\lim_{m\to\infty} \limsup_{n \to \infty} \sum_{j = 1}^{k_m} \P(\xi_n(I_{m,j}) > 1) = 0\,;$$
		
		\item \label{it:avoid} For each finite union $U$ of bounded intervals $$\P(\xi_n(U) = 0) \to \P(\xi(U) = 0)\,;$$
	\end{enumerate}
	Then $\xi_n$ converges to $\xi$ in distribution with respect to the vague topology.
\end{lemma}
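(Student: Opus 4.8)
The plan is to prove the vague convergence $\xi_n \Rightarrow \xi$ by the standard two-step scheme: reduce to convergence of finite-dimensional distributions over a rich enough class of test sets, and then verify that convergence using the void-probability hypothesis \eqref{it:avoid} together with the double-point hypothesis \eqref{it:double}, exploiting that $\xi$ is simple. Concretely, I would invoke the classical finite-dimensional criterion for vague convergence of point processes: $\xi_n \Rightarrow \xi$ holds as soon as $(\xi_n(I_1),\ldots,\xi_n(I_r)) \Rightarrow (\xi(I_1),\ldots,\xi(I_r))$ for every finite family of bounded intervals $I_1,\ldots,I_r$ that are $\xi$-continuity sets. The no-fixed-atom hypothesis $\P(\xi(\{x\})>0)=0$ forces $\P(\xi(\partial I)>0)=0$ for \emph{every} interval $I$, so in fact all bounded intervals are admissible, and it suffices to treat disjoint ones.

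For the fidi convergence, fix disjoint intervals $I_1,\ldots,I_r$ and refine each into the partition cells supplied by hypothesis \eqref{it:double}. Writing $N_n^{(m)}(I_\ell)$ for the number of cells inside $I_\ell$ that $\xi_n$ charges, the crucial estimate is that $\P(\xi_n(I_\ell)\neq N_n^{(m)}(I_\ell)\text{ for some }\ell) \le \sum_{\text{cells}} \P(\xi_n(\text{cell})>1)$, which by \eqref{it:double} tends to $0$ upon taking $\limsup_{n\to\infty}$ and then $m\to\infty$. Thus $\xi_n$ and its occupied-cell approximation are asymptotically indistinguishable, so it is enough to control the joint law of the indicators $\one[\xi_n(\text{cell})>0]$. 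That joint law is, by inclusion–exclusion, a finite signed combination of void probabilities $\P(\xi_n(U)=0)$ over finite unions $U$ of cells, and hypothesis \eqref{it:avoid} forces each such quantity to converge to the corresponding void probability of $\xi$. Hence for each fixed $m$ the occupied-cell vector for $\xi_n$ converges in distribution to that for $\xi$.

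It then remains to interchange the two limits and close the loop on the limit side. Since $\xi$ is simple and has no fixed atoms, as the mesh shrinks each point of $\xi$ eventually occupies its own cell, so $N^{(m)}(I_\ell) \uparrow \xi(I_\ell)$ almost surely; combined with the previous paragraph and a standard three-term comparison over $m$, this yields $(\xi_n(I_\ell))_\ell \Rightarrow (\xi(I_\ell))_\ell$. The one technical point requiring care is \emph{tightness}, i.e.\ ruling out counts escaping to infinity when exchanging $n\to\infty$ and $m\to\infty$. This is again where \eqref{it:double} pays off: for fixed $m$ one has $N_n^{(m)}(I_\ell)\le k_m$ with $\E N_n^{(m)}(I_\ell)=\sum_j \P(\xi_n(I_{m,j})>0)\to \sum_j \P(\xi(I_{m,j})>0)\le \E\xi(I_\ell)$, so Markov's inequality bounds $\P(N_n^{(m)}(I_\ell)>\rho)$ uniformly in $n$, and adding the double-point error controls $\P(\xi_n(I_\ell)>\rho)$ uniformly; since $\xi(I_\ell)<\infty$ almost surely, letting $\rho\to\infty$ gives the needed tightness.

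The main obstacle is precisely this double-limit bookkeeping: making the occupied-cell approximation uniform enough in $n$ that one may first let $n\to\infty$ (using \eqref{it:avoid}) and then refine the partition (using simplicity of $\xi$) without losing mass. The two hypotheses are tailored to exactly these roles — \eqref{it:avoid} identifies the subsequential limit through its avoidance function, which determines a simple process uniquely, while \eqref{it:double} both licenses the replacement of $\xi_n$ by its occupied-cell count and supplies the tightness that prevents a defective limit. Once finite-dimensional convergence over the (automatically continuous) bounded intervals is established, the reduction of the first paragraph delivers vague convergence.
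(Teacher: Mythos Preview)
The paper does not give its own proof of this lemma; it is quoted verbatim as Theorem~2.5 of Kallenberg's monograph and used as a black box. Your outline is a correct and standard route to that result: reduce vague convergence to convergence of finite-dimensional distributions over bounded intervals (all of which are $\xi$-continuity sets thanks to the no-fixed-atom hypothesis), approximate each $\xi_n(I_\ell)$ by its occupied-cell count with error controlled by hypothesis~\eqref{it:double}, recover the joint law of the occupancy indicators from void probabilities via inclusion--exclusion and hypothesis~\eqref{it:avoid}, and close the loop on the limit side using simplicity of $\xi$.

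Two small points worth tightening. First, your claim $N^{(m)}(I_\ell)\uparrow \xi(I_\ell)$ presumes the partitions are nested refinements, which the hypothesis does not demand; nevertheless plain a.s.\ convergence $N^{(m)}(I_\ell)\to \xi(I_\ell)$ still holds once the mesh drops below the minimum spacing of $\xi$'s (finitely many, distinct) points in $I_\ell$, and that is all you need. Second, your tightness argument appeals to $\E\xi(I_\ell)$, which need not be finite under the stated assumptions; the cleaner fix is the one you already noted in passing, namely $N_n^{(m)}(I_\ell)\le k_m$ deterministically, so for any $\eps>0$ one may pick $m$ with $\limsup_n \sum_j \P(\xi_n(I_{m,j})>1)<\eps$ and then set $\rho=k_m$ to get $\limsup_n \P(\xi_n(I_\ell)>\rho)<\eps$.
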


The more difficult condition of the two to show is typically \eqref{it:avoid}.  We would like to say that condition \eqref{it:avoid} of Lemma \ref{lem:Kallenberg} follows from the convergence of $g_n$ to $g$.  Unfortunately, the function $F_0: \mathcal{C}([-M,M]) \to [0,1]$ defined by $F_0(h) = \one\{\exists~x \in [-M,M] : h(x) = 0 \}$ is not continuous with respect to $h$.  Using the convergence of $g_n'$ and $g_n''$ will allow us to successfully approximate $F_0$.  The following basic analytic fact will let us work in that direction.

\begin{lemma}\label{lem:nearly-zero}
	Let $[a,b]$ be an interval.  Then for $\eps$ sufficiently small the following holds: if $h$ is a twice differentiable function so that there is a point $x \in [a+\eps^{1/2},b-\eps^{1/2}]$ with $|h(x)| \leq \eps  $ and $|h'(x)| \geq \eps^{1/4}$ and $\sup_{t \in [a,b]}|h''(t)| \leq \eps^{-1/4}$, then $h$ has a zero in $[a,b]$.
\end{lemma}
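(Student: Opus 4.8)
The plan is to run a second-order Taylor expansion of $h$ about the point $x$ and then invoke the intermediate value theorem. We may assume $h'(x)>0$, since otherwise we replace $h$ by $-h$; thus $h'(x)\ge \eps^{1/4}$. The heuristic is that in a neighbourhood of $x$ whose radius $\Delta$ is chosen appropriately, the linear term $h'(x)(t-x)$ dominates \emph{both} the value $h(x)$ (of size at most $\eps$) and the quadratic Lagrange remainder (controlled by $\sup_{[a,b]}|h''|\le \eps^{-1/4}$). Hence $h$ is forced to be strictly positive a little to the right of $x$ and strictly negative a little to the left, and continuity of $h$ (which follows from differentiability) yields a zero in between.

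Concretely, I would set $\Delta=\eps^{5/8}$ and look at the two points $x\pm\Delta$. Because $x\in[a+\eps^{1/2},\,b-\eps^{1/2}]$ and $\Delta\le\eps^{1/2}$ once $\eps$ is small, both points lie in $[a,b]$, and so does the whole segment joining $x$ to either of them. By Taylor's theorem there is, for each such $t$, a point $\xi$ between $x$ and $t$ (hence $\xi\in[a,b]$) with
\[
h(t)=h(x)+h'(x)(t-x)+\tfrac12 h''(\xi)(t-x)^2 .
\]
Using $|h(x)|\le\eps$ and $|h''(\xi)|\le\eps^{-1/4}$ this gives
\[
h(x+\Delta)\ \ge\ \eps^{1/4}\Delta-\eps-\tfrac12\eps^{-1/4}\Delta^2\ =\ \eps^{7/8}-\eps-\tfrac12\eps\ =\ \eps^{7/8}\bigl(1-\tfrac32\eps^{1/8}\bigr),
\]
which is positive for $\eps$ small, and symmetrically $h(x-\Delta)\le -\eps^{7/8}+\eps+\tfrac12\eps<0$. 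Thus $h$ changes sign on $[x-\Delta,\,x+\Delta]\subset[a,b]$, and the intermediate value theorem produces a zero of $h$ in $[a,b]$.

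The only point needing any thought is the choice of exponent in $\Delta$: it must be large enough that $h'(x)\Delta$ beats $|h(x)|$, i.e.\ $\Delta\gg\eps^{3/4}$, and small enough that $h'(x)\Delta$ beats $\eps^{-1/4}\Delta^2$, i.e.\ $\Delta\ll\eps^{1/2}$; any exponent strictly between $1/2$ and $3/4$ works, and $\eps^{5/8}$ is a convenient pick (and automatically satisfies $\Delta\le\eps^{1/2}$ so that $x\pm\Delta\in[a,b]$). Beyond this bookkeeping of exponents there is no real obstacle.
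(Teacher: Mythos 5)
Your proof is correct and follows essentially the same route as the paper: a second-order Taylor expansion about $x$ to show a sign change within distance $O(\eps^{1/2})$ of $x$, followed by the intermediate value theorem. The only cosmetic differences are your step size $\eps^{5/8}$ in place of the paper's $\eps^{1/2}$ and your use of both endpoints $x\pm\Delta$ rather than comparing with the value at $x$ itself; the exponent bookkeeping checks out in both cases.
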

\begin{proof}
	Suppose $h(x) \geq 0$ and $h'(x) \geq 0$ as the other cases are symmetric.  Then 
	\begin{align*}
	h(x - \eps^{1/2}) &\leq h(x) - \eps^{1/2} h'(x) + \frac{\eps}{2} \sup_{t \in [a,b]} |h''(t)| \\
	&\leq \eps - \eps^{3/4} + \frac{\eps^{3/4}}{2} \\
	&< 0
	\end{align*}
	for $\eps$ small enough.  Since $x - \eps^{1/2} \in [a,b]$ and $h(x) \geq 0$, there must be some zero in $[x- \eps^{1/2} ,x] \subset [a,b]$.
\end{proof}

Lemma \ref{lem:nearly-zero} can be interpreted as saying that if a function $h$ is small on an interval $[a,b]$ then $h$ has a zero on $[a,b]$ unless something ``atypical'' happens: either $h$ is only small near the boundary of $[a,b]$; the derivative of $h$ is small at all points where $h$ is small; or that the second derivative of $h$ is large.  In our case, these ``atypical'' events truly are atypical in the sense that the probability any happens for $g_n$ or $g$ will tend to zero for large $n$ and small $\eps$.  Further, the functions on $\mathcal{C}([-M,M])$ that underly these events may be easily approximated by continuous functions.  This lets us use Lemma \ref{lem:convergence-function} to prove facts about $g$ from $(g_n)_{n}$ and vice versa.  

In this direction,  we show that the probabilistic upper bound on $g_n^{(j)}$ translates to an identical bound for $g^{(j)}$.

\begin{lemma}\label{lem:g-deriv}
	For each $M > 0$ and $j \in \{0,1,2\}$, $$\P\left(\max_{t \in [-M,M]} |g^{(j)}(t)| \geq T \right) \leq \frac{e^M}{T}\,.$$
\end{lemma}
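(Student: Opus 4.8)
The plan is to transfer the uniform bound of Lemma~\ref{lem:max-bound} from $g_n^{(j)}$ to $g^{(j)}$ using the weak convergence established in Lemma~\ref{lem:convergence-function}. The key observation is that the map $\Phi_j : \mathcal{C}([-M,M])^3 \to \R$ sending $(h_0,h_1,h_2)$ to $\max_{t \in [-M,M]}|h_j(t)|$ is continuous: it is the composition of the $j$-th coordinate projection with the sup-norm on $\mathcal{C}([-M,M])$, both of which are continuous. Hence by Lemma~\ref{lem:convergence-function} and the continuous mapping theorem, $\max_{t \in [-M,M]}|g_n^{(j)}(t)|$ converges in distribution to $\max_{t \in [-M,M]}|g^{(j)}(t)|$ as $n \to \infty$.

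Next I would record the uniform tail bound for the pre-limit: for every $n$ and every $T > 0$, Markov's inequality together with Lemma~\ref{lem:max-bound} gives
$$\P\Bigl(\max_{t \in [-M,M]}|g_n^{(j)}(t)| > T\Bigr) \le \frac{1}{T}\,\E \max_{t \in [-M,M]}|g_n^{(j)}(t)| \le \frac{e^M}{T}\,.$$
Since $(T,\infty)$ is an open subset of $\R$, the portmanteau theorem applied to the weak convergence from the previous paragraph yields
$$\P\Bigl(\max_{t \in [-M,M]}|g^{(j)}(t)| > T\Bigr) \le \liminf_{n\to\infty}\P\Bigl(\max_{t \in [-M,M]}|g_n^{(j)}(t)| > T\Bigr) \le \frac{e^M}{T}\,.$$
To pass from the strict to the non-strict inequality, note that $\{\max_{t}|g^{(j)}(t)| \ge T\} = \bigcap_{T' < T}\{\max_{t}|g^{(j)}(t)| > T'\}$, so by continuity of measure $\P(\max_t |g^{(j)}(t)| \ge T) = \lim_{T' \uparrow T}\P(\max_t|g^{(j)}(t)| > T') \le \lim_{T' \uparrow T} e^M/T' = e^M/T$, which is the claimed bound.

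An equally short alternative avoids the portmanteau bookkeeping entirely: invoke the Skorokhod representation theorem (valid since $\mathcal{C}([-M,M])^3$ is a separable metric space) to realize the convergence of Lemma~\ref{lem:convergence-function} almost surely on a common probability space, so that $\max_t|g_n^{(j)}(t)| \to \max_t|g^{(j)}(t)|$ almost surely; then Fatou's lemma gives $\E \max_t|g^{(j)}(t)| \le \liminf_n \E\max_t|g_n^{(j)}(t)| \le e^M$, and one concludes by a single application of Markov's inequality. Either way the argument is essentially routine; the only point requiring a moment's care is the direction of the inequality in the portmanteau step (open versus closed sets), which is why it is cleanest to first bound $\P(\,\cdot\, > T)$ and only then take an increasing limit in $T$. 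I do not expect any genuine obstacle here.
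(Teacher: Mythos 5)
Your proof is correct and follows essentially the same route as the paper: both start from the prelimit tail bound $\P(\max_t|g_n^{(j)}(t)|\geq T)\leq e^M/T$ via Lemma~\ref{lem:max-bound} and Markov, and then transfer it to $g^{(j)}$ using the weak convergence of Lemma~\ref{lem:convergence-function}. The only (minor, and arguably cleaner) difference is in the transfer step: the paper sandwiches the discontinuous indicator $\one\{\max_t|h(t)|\geq T\}$ between continuous approximations $G_{T\pm\eps}$ and lets $\eps\to 0$, whereas you apply the continuous mapping theorem to the sup functional and then the portmanteau inequality for open sets, handling the strict-versus-nonstrict inequality by continuity of measure.
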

\begin{proof}
	Note that for each $n$ we have $$\P\left(\max_{t \in [-M,M]} |g_n^{(j)}(t)| \geq T \right) \leq \frac{e^{M}}{T}$$
	for each $T \geq 0$ by Lemma \ref{lem:max-bound} and Markov's inequality.  For each $T$ set $G_{T}:\mathcal{C}([-M,M]) \to [0,1]$ to be the function $$G_{T}(h) = \one\left\{\max_{t \in [-M,M]} |h(t)| \geq T  \right\}\,.$$
	While the function $G_{T}$ is not continuous in the uniform topology, for each $\eps$ we may make a continuous approximation $G_{T}^{(\eps)}$ to $G_{T}$ so that $G_{T+\eps} \leq G_{T}^{(\eps)} \leq G_{T-\eps}$.  Lemma \ref{lem:convergence-function} then implies $$\E G_{T}^{(\eps)}(g_n^{(j)}) \to \E G_{T}^{(\eps)}(g^{(j)})$$
	and so $$\P\left( \max_{t \in [-M,M]} |g^{(j)}(t)| \geq T+\eps \right) \leq \frac{e^M}{T-\eps}$$
	for each $\eps > 0$.  Taking $\eps$ to $0$ completes the proof.
\end{proof}

With Lemma \ref{lem:g-deriv} in tow, we now begin to show that the ``atypical'' events listed in Lemma \ref{lem:nearly-zero} have probability tending to zero as $\eps \to 0$.  We make no attempt of obtaining sharp upper bounds, only those that tend to zero. 

First we show that $g$ is usually not small on short intervals.

\begin{lemma} \label{lem:g-small-ints}
	Fix $M > 0$ and $k > 0$.  For any $k$ intervals $I_j \subset [-M,M]$ with each $|I_j| \leq \eps^{1/2}$, $$\sum_{j = 1}^k \P( \exists~x \in I_j \text{ s.t. }|g(x)|\leq \eps ) = O(\eps^{1/4})$$
	uniformly as $\eps \to 0$.
\end{lemma}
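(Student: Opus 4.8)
The plan is to reduce the event that $g$ is small somewhere on a short interval $I_j$ to the event that $g$ is small at the single point $a_j$ equal to the left endpoint of $I_j$, paying for this reduction with control on $g'$ coming from Lemma \ref{lem:g-deriv}, and then to bound the resulting single-point event by Gaussian anti-concentration. Note $a_j \in [-M,M]$.

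First I would record a pathwise estimate: since $g$ is almost surely differentiable (Lemma \ref{lem:g-exists}), for every $x \in I_j$ we have $|g(x) - g(a_j)| \le |I_j|\max_{t\in[-M,M]}|g'(t)| \le \eps^{1/2}\max_{t\in[-M,M]}|g'(t)|$. Hence, on the event $\{\max_{t\in[-M,M]}|g'(t)| \le \eps^{-1/4}\}$, if some $x \in I_j$ satisfies $|g(x)| \le \eps$ then $|g(a_j)| \le \eps + \eps^{1/4} \le 2\eps^{1/4}$ for $\eps \le 1$. Applying Lemma \ref{lem:g-deriv} with the derivative $g'$ and threshold $T = \eps^{-1/4}$ then gives
\[
\P\big(\exists\,x \in I_j \text{ s.t. } |g(x)| \le \eps\big) \;\le\; \P\big(|g(a_j)| \le 2\eps^{1/4}\big) + e^M\eps^{1/4}.
\]

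Next I would estimate $\P(|g(a_j)| \le 2\eps^{1/4})$ by anti-concentration: the variable $g(a_j)$ is a centered Gaussian with variance $\int_0^1 e^{2a_j t}\,dt \ge \int_0^1 e^{-2Mt}\,dt =: c_M > 0$, a bound that is uniform over $a_j \in [-M,M]$, so its density is at most $(2\pi c_M)^{-1/2}$ and $\P(|g(a_j)| \le 2\eps^{1/4}) \le 4(2\pi c_M)^{-1/2}\eps^{1/4}$. Summing the displayed bound over $j = 1,\ldots,k$ then yields $\sum_j \P(\exists\,x\in I_j:\,|g(x)|\le\eps) \le k\big(4(2\pi c_M)^{-1/2} + e^M\big)\eps^{1/4} = O(\eps^{1/4})$, with the implied constant depending only on the fixed parameters $M$ and $k$.

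I do not expect a genuine obstacle here; the only point requiring a little care is that the anti-concentration estimate must be uniform in the location $a_j$, which is exactly why one records the lower bound $c_M$ on $\Var\,g(a_j)$ valid throughout $[-M,M]$, and that the reduction to a single point is deterministic (valid on the full-measure event that $g$ is differentiable) so that it may be combined freely with the probabilistic bound of Lemma \ref{lem:g-deriv}.
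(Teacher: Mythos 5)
Your proof is correct, and it is a genuine (if modest) streamlining of the paper's argument. Both proofs rest on the same two ingredients: the uniform derivative bound $\P(\max_{[-M,M]}|g'| \geq \eps^{-1/4}) \leq e^M \eps^{1/4}$ from Lemma \ref{lem:g-deriv}, and Gaussian anti-concentration at a single point, which is legitimate because $\Var\, g(x) = \int_0^1 e^{2xt}\,dt$ is bounded below uniformly on $[-M,M]$. The difference is in the discretization: the paper places a mesh of $L = O(\eps^{-3/4})$ points with spacing $\eps^{5/4}$ inside each $I_j$, so that the transported smallness window at each mesh point is only $2\eps$, and then pays a factor $L$ in the union bound, giving $O(\eps^{-3/4})\cdot O(\eps) = O(\eps^{1/4})$ per interval. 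You instead use a single point (the left endpoint) per interval, accepting the larger window $\eps + \eps^{1/2}\cdot\eps^{-1/4} \leq 2\eps^{1/4}$, which costs $O(\eps^{1/4})$ per interval directly. Since $k$ is fixed, the two bookkeeping schemes give the same final bound; yours is shorter and avoids the mesh entirely, while the paper's mesh strategy is the one that generalizes to situations (as in Lemmas \ref{lem:g-and-gp} and \ref{lem:double}) where the number of intervals or points is allowed to grow as $\eps \to 0$ and a per-point probability as small as $O(\eps)$ is actually needed. One cosmetic remark: the "uniformly as $\eps \to 0$" in the statement refers to uniformity over the choice of the intervals $I_j$ with $M$ and $k$ fixed, and your explicit constant $k\bigl(4(2\pi c_M)^{-1/2} + e^M\bigr)$ makes that uniformity transparent.
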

\begin{proof}
	Fix a mesh of points $x_1,\ldots,x_L$ in $I_j$ so that for each $y \in I$ we have $|y - x_j | \leq \eps^{5/4}$ for some $j$ and $L = O(\eps^{-3/4})$.  By Lemma \ref{lem:g-deriv}, we may condition on $|g'(t)| \leq \eps^{-1/4}$ for all $t \in [-M,M]$ and only introduce an event of probability $O(\eps^{1/4}).$  For each $x_j$, we have $\P(|g(x_j)| \leq 2\eps) = O(\eps)$ with implicit constant depending only on $M$.  However, conditioned on $|g'(t)| \leq \eps^{-1/4}$, if there is some $y \in I_j$ with $|g(y)| \leq \eps$, then for the $x_j$ with $|x_j - y| \leq \eps^{5/4}$ we have $$|g(x_j)| \leq \eps + \eps^{5/4} \cdot \eps^{-1/4} = 2\eps$$ by the mean value theorem.  Thus we may bound \begin{align*}
	\sum_{j = 1}^k \P( \exists~x \in I_j \text{ s.t. }|g(x)|\leq \eps ) &\leq O(\eps^{1/4}) + \sum_{j = 1}^L \P(|g(x_j)| \leq 2\eps) \\
	&= O(\eps^{1/4}) + O(\eps^{-3/4})\cdot O(\eps) \\
	&= O(\eps^{1/4})\,.
	\end{align*}
\end{proof}

Next we show that $g$ and $g'$ are typically not simultaneously small.

\begin{lemma}\label{lem:g-and-gp}
	For each $M > 0$ $$\P( \exists~x \in [-M,M] \text{ s.t. }|g(x)| \leq \eps \text{ and }|g'(x)| \leq \eps^{1/4}) = O(\eps^{1/8})\,.$$
\end{lemma}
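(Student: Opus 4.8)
The plan is to mimic the strategy of Lemma \ref{lem:g-small-ints}: discretize, use a union bound over a mesh, and control the probability of the small-ball event at a single point by the anti-concentration of a Gaussian. The key difference is that here we are controlling a joint event $\{|g(x)| \le \eps, |g'(x)| \le \eps^{1/4}\}$, so at a single point we need a small-ball estimate for the \emph{pair} $(g(x), g'(x))$. This is where I would begin: I would first observe that for each fixed $x$, the vector $(g(x), g'(x))$ is a centered Gaussian in $\R^2$ whose covariance matrix is nondegenerate, with determinant bounded below uniformly for $x \in [-M,M]$. Indeed, $\Var g(x) = \int_0^1 e^{2xt}\,dt$, $\Var g'(x) = \int_0^1 t^2 e^{2xt}\,dt$, and $\Cov(g(x),g'(x)) = \int_0^1 t e^{2xt}\,dt$, and the Cauchy-Schwarz inequality is strict since $1$ and $t$ are linearly independent in $L^2([0,1])$; continuity and compactness then give a uniform lower bound on the determinant. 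Consequently the density of $(g(x),g'(x))$ is bounded above uniformly, so $\P(|g(x)| \le 2\eps,\ |g'(x)| \le 2\eps^{1/4}) = O(\eps \cdot \eps^{1/4}) = O(\eps^{5/4})$, with implicit constant depending only on $M$.

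Next I would reduce the event over the whole interval $[-M,M]$ to the event over a finite mesh using the derivative bounds from Lemma \ref{lem:g-deriv}. Condition on the event $E$ that $\max_{t \in [-M,M]} |g'(t)| \le \eps^{-1/4}$ and $\max_{t \in [-M,M]} |g''(t)| \le \eps^{-1/4}$, which by Lemma \ref{lem:g-deriv} fails with probability only $O(\eps^{1/4})$. Take a mesh $x_1,\ldots,x_L \in [-M,M]$ with spacing at most $\eps^{5/4}$, so $L = O(\eps^{-5/4})$ (the spacing needs $\eps^{5/4}\cdot\eps^{-1/4}=\eps$ for the value bound, and we should double-check the matching requirement for $g'$: $\eps^{5/4}\cdot \eps^{-1/4} = \eps \le \eps^{1/4}$, which is fine for small $\eps$). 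On $E$, if there is a point $y \in [-M,M]$ with $|g(y)| \le \eps$ and $|g'(y)| \le \eps^{1/4}$, then for the nearest mesh point $x_j$ we have $|g(x_j)| \le \eps + \eps^{5/4}\cdot\eps^{-1/4} = 2\eps$ and $|g'(x_j)| \le \eps^{1/4} + \eps^{5/4}\cdot\eps^{-1/4} \le 2\eps^{1/4}$ by the mean value theorem applied to $g$ and $g'$ respectively. A union bound then gives
\begin{align*}
\P(\exists~x \in [-M,M] : |g(x)| \le \eps,\ |g'(x)| \le \eps^{1/4}) &\le O(\eps^{1/4}) + \sum_{j=1}^L \P(|g(x_j)| \le 2\eps,\ |g'(x_j)| \le 2\eps^{1/4}) \\
&= O(\eps^{1/4}) + O(\eps^{-5/4}) \cdot O(\eps^{5/4}) \\
&= O(1)\,,
\end{align*}
which is not good enough --- the mesh is too fine.

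So the real obstacle is the mesh size: a naive mesh of spacing $\eps^{5/4}$ costs $\eps^{-5/4}$ points, which exactly cancels the $\eps^{5/4}$ single-point bound. The fix is to use a \emph{coarser} mesh together with a sharper propagation argument. I would instead take spacing $\eps$ (so $L = O(\eps^{-1})$ points); then on $E$, a point $y$ with $|g(y)| \le \eps$, $|g'(y)| \le \eps^{1/4}$ propagates to the nearest mesh point $x_j$ giving $|g(x_j)| \le \eps + \eps\cdot\eps^{-1/4} = \eps + \eps^{3/4} \le 2\eps^{3/4}$ and $|g'(x_j)| \le \eps^{1/4} + \eps\cdot\eps^{-1/4} = 2\eps^{3/4}$. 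The single-point bound becomes $\P(|g(x_j)| \le 2\eps^{3/4},\ |g'(x_j)| \le 2\eps^{3/4}) = O(\eps^{3/4}\cdot\eps^{3/4}) = O(\eps^{3/2})$, so the union bound yields $O(\eps^{1/4}) + O(\eps^{-1})\cdot O(\eps^{3/2}) = O(\eps^{1/4}) + O(\eps^{1/2}) = O(\eps^{1/4})$, matching the stated bound. (One should optimize the exponent in the mesh spacing $\eps^{\a}$; any $\a \in (1/2, 1]$ works after balancing, and the stated $O(\eps^{1/8})$ gives comfortable room.) The main thing to be careful about is this balancing of mesh fineness against the bivariate anti-concentration estimate; everything else --- the nondegeneracy of the covariance, the conditioning via Lemma \ref{lem:g-deriv}, and the mean value theorem propagation --- is routine.
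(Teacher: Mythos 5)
Your overall strategy is exactly the paper's: reduce to a mesh via the derivative bounds of Lemma \ref{lem:g-deriv}, then union-bound using the bivariate small-ball estimate $\P(|g(x)|\leq a,\ |g'(x)|\leq b)=O(ab)$ coming from the nondegeneracy of $\Cov(g(x),g'(x))$ (your justification of that nondegeneracy is correct and is the right way to fill in what the paper leaves implicit). You also correctly diagnose that the naive mesh of spacing $\eps^{5/4}$ fails. But your proposed fix does not work, and the error is in the line $|g'(x_j)| \leq \eps^{1/4} + \eps\cdot\eps^{-1/4} = 2\eps^{3/4}$: you have $\eps^{1/4}+\eps^{3/4}$, and for small $\eps$ this is $\leq 2\eps^{1/4}$, \emph{not} $\leq 2\eps^{3/4}$ (the propagation error $\eps^{3/4}$ is the smaller of the two terms; it cannot improve the original bound $|g'(y)|\leq\eps^{1/4}$). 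The correct single-point probability with your parameters is therefore $O(\eps^{3/4}\cdot\eps^{1/4})=O(\eps)$, and the union over $L=O(\eps^{-1})$ mesh points again gives $O(1)$. Worse, this cannot be repaired by re-tuning the mesh alone: if you condition on $\max|g'|,\max|g''|\leq\eps^{-1/4}$ and use spacing $\eps^{\a}$, the union term is $\eps^{-\a}\cdot\eps^{\min(1,\a-1/4)+\min(1/4,\a-1/4)}$, which is $\Omega(1)$ for every $\a$. So with the derivative bound held at $\eps^{-1/4}$ the argument fails for all mesh choices; your parenthetical claim that ``any $\a\in(1/2,1]$ works after balancing'' is not correct as stated.

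The missing idea is that the \emph{level of the derivative conditioning} must be balanced as well, and this is precisely where the exponent $1/8$ in the statement comes from. The paper conditions on $\max_{t}|g'(t)|\leq\eps^{-1/8}$ and $\max_t|g''(t)|\leq\eps^{-1/8}$ (cost $O(\eps^{1/8})$ by Lemma \ref{lem:g-deriv}) and takes mesh spacing $\eps^{9/8}$, so that the propagation error is $\eps^{9/8}\cdot\eps^{-1/8}=\eps$. This keeps the mesh-point event at $\{|g(x_j)|\leq 2\eps,\ |g'(x_j)|\leq 2\eps^{1/4}\}$, preserving the full $O(\eps^{5/4})$ anticoncentration, and the union over $O(\eps^{-9/8})$ points gives $O(\eps^{1/8})$, matching the conditioning cost. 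In general, conditioning at level $\eps^{-\beta}$ yields a total bound $O(\eps^{\beta}+\eps^{1/4-\beta})$, optimized at $\beta=1/8$; your first attempt sits at $\beta=1/4$, where the second term is $O(1)$.
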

\begin{proof}
	The proof is very similar to that of Lemma \ref{lem:g-small-ints}. 	Let $x_1,\ldots,x_L$ be points in $[-M,M]$ so that for each $y \in [-M,M]$ we have $|y - x_j| \leq \eps^{9/8}$ for some $j$ and $L = O(\eps^{-9/8})$. Condition on $\max_{t \in [-M,M]} |g'(t)| \leq \eps^{-1/8}$ and $\max_{t \in [-M,M]} |g''(t)| \leq \eps^{-1/8}$.  If there is a point $y \in [-M,M]$ with $|g(y)| \leq \eps$ and $|g'(y)| \leq \eps^{1/4}$ let $x_j$ the point with $|y - x_j| \leq \eps^{9/8}$.  Then $$|g(x_j)| \leq \eps + \eps^{9/8} \eps^{-1/8} \leq 2\eps$$ and $$|g'(x_j)| \leq \eps^{1/4} + \eps^{-1/8}\cdot \eps^{9/8} \leq 2 \eps^{1/4}\,.$$
	
	Since for each point we have $\P(|g(x_j)| \leq 2\eps \text{ and }|g'(x_j)| \leq 2\eps^{1/4}) = O(\eps^{5/4})$, we may bound \begin{align*}
	\P(& \exists~x \in [-M,M] \text{ s.t. }|g(x)| \leq \eps \text{ and }|g'(x)| \leq \eps^{1/4})\\
	&\leq O(\eps^{1/8}) + \sum_{j = 1}^L \P(|g(x_j)| \leq 2\eps \text{ and }|g'(x_j)| \leq 2\eps^{1/4}) \\
	&= O(\eps^{1/8}) + O(\eps^{-9/8}) \cdot O(\eps^{5/4}) \\
	&= O(\eps^{1/8})\,.
	\end{align*} 
\end{proof}

We are now ready to show condition \eqref{it:avoid} of Lemma \ref{lem:Kallenberg}.  Throughout, we will implicitly use approximation arguments as in the proof of Lemma \ref{lem:g-deriv} but are less explicit since the approximation argument is identical. 

\begin{lemma}\label{lem:avoid}
	Let $U$ be a disjoint union of finitely many intervals.  Then $$\P(\nu_n(U) = 0) \to \P(\nu(U) = 0)\,.$$
\end{lemma}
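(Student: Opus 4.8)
The plan is to show that the event $\{\nu_n(U) = 0\}$, i.e.\ that $g_n$ has no zero in $U$, can be sandwiched between two events that are (up to an error going to zero) continuous functions of the triple $(g_n, g_n', g_n'')$ in the uniform topology, and then invoke Lemma~\ref{lem:convergence-function}. First I would reduce to the case where $U = [a,b]$ is a single interval: since $U$ is a disjoint union of finitely many closed intervals $I_1,\ldots,I_m$, the event $\{\nu_n(U)=0\}$ is the intersection of the events $\{\nu_n(I_\ell)=0\}$, and it suffices to control each. (One must be slightly careful at shared endpoints, but since $\P(g(x)=0)=0$ for each fixed $x$ — the variance $\int_0^1 e^{2xt}\,dt$ is positive — and likewise $\P(g_n(x)=0)\to 0$, we can shrink each $I_\ell$ slightly to make the union disjoint and open up to a negligible error.) So fix $[a,b] \subset [-M,M]$.

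The key dichotomy comes from Lemma~\ref{lem:nearly-zero}. Fix a small $\eps>0$ and a fine mesh $a = t_0 < t_1 < \cdots < t_L = b$ with spacing at most $\eps^{1/2}/2$. Consider the "almost-zero" event
\[
A_n(\eps) = \left\{\exists\, j:\ |g_n(t_j)| \leq \eps,\ |g_n'(t_j)| \geq \eps^{1/4},\ \sup_{t\in[a,b]}|g_n''(t)| \leq \eps^{-1/4}\right\}.
\]
On $A_n(\eps)$, Lemma~\ref{lem:nearly-zero} forces $g_n$ to have a zero in $[a,b]$, so $A_n(\eps) \subseteq \{\nu_n([a,b]) \geq 1\}$. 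Conversely, if $g_n$ has a zero $x^\ast \in [a-\eps^{1/2}, b+\eps^{1/2}]$ — which is implied by $\nu_n([a,b])\geq 1$ — then either one of the "atypical" events of Lemma~\ref{lem:nearly-zero} holds (small derivative near $x^\ast$, or large second derivative) or, using a mesh point $t_j$ within $\eps^{1/2}/2$ of $x^\ast$ together with a bound on $g_n''$, we get $|g_n(t_j)|$ small, placing us on an event close to $A_n(\eps)$ for the slightly enlarged interval. Thus, writing $B_n(\eps)$ for the event that $g_n$ has a zero in $[a-\eps^{1/2},b+\eps^{1/2}]$, and $C_n(\eps)$ for the union of the atypical events (some $|g_n'|$ small where $|g_n|$ small, or $\sup|g_n''|$ large), we obtain the sandwich
\[
A_n(\eps) \setminus C_n(\eps) \ \subseteq\ \{\nu_n([a,b]) \geq 1\}\ \subseteq\ B_n(\eps) \cup C_n(\eps),
\]
and symmetrically for $g$ in place of $g_n$.

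Now the three ingredients combine. The probabilities of the atypical events $C_n(\eps)$ and their $g$-analogues are $O(\eps^{1/8})$ uniformly in $n$: the small-derivative-where-small-value part is Lemma~\ref{lem:g-and-gp} (and its $g_n$-analogue, proved identically using Lemma~\ref{lem:max-bound} and the Gaussian small-ball estimates from Lemma~\ref{lem:fidi} that give $\P(|g_n(t_j)|\leq \delta)=O(\delta)$ uniformly), and the large-second-derivative part is controlled by Lemma~\ref{lem:max-bound} and Lemma~\ref{lem:g-deriv} via Markov. The event $A_n(\eps)$ is determined by finitely many evaluations of $g_n, g_n', g_n''$ together with the sup of $|g_n''|$ over $[a,b]$; each of these is a continuous functional of $(g_n,g_n',g_n'')$ in the uniform topology \emph{except} on the boundary sets $\{|g_n(t_j)| = \eps\}$, $\{|g_n'(t_j)| = \eps^{1/4}\}$, $\{\sup|g_n''| = \eps^{-1/4}\}$, which have limiting probability zero (since $g(t_j)$, $g'(t_j)$ are non-degenerate Gaussians and $\sup_{[a,b]}|g''|$ has a continuous distribution, or we perturb the thresholds slightly to avoid atoms). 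Hence by Lemma~\ref{lem:convergence-function} and the portmanteau theorem, $\P(A_n(\eps)) \to \P(A(\eps))$ and likewise $\P(B_n(\eps))\to\P(B(\eps))$. Feeding the sandwich inequalities in, for each fixed $\eps$ we get
\[
\limsup_n \big|\P(\nu_n([a,b])=0) - \P(\nu([a,b])=0)\big| = O(\eps^{1/8}) + \big(\P(B(\eps)) - \P(A(\eps))\big)^{+},
\]
and the second term is also $O(\eps^{1/8})$ because $B(\eps)\setminus A(\eps)$ is contained in the atypical event $C(\eps)$ enlarged to $[a-\eps^{1/2},b+\eps^{1/2}]$ (an interval still inside a fixed compact set for $\eps$ small). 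Letting $\eps\to 0$ finishes the single-interval case, and the finite-union reduction from the first paragraph completes the lemma.

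**Main obstacle.** The routine parts are the small-ball and derivative bounds; the delicate point is the approximation argument making $\P(A_n(\eps))\to\P(A(\eps))$ rigorous, namely checking that the relevant functionals of $(g_n,g_n',g_n'')$ are continuous off a set of limiting measure zero — this requires knowing the thresholds $\eps,\eps^{1/4},\eps^{-1/4}$ are not atoms of the limiting distributions, which is why one should either argue non-degeneracy of the relevant one-dimensional marginals or, more robustly, replace each fixed threshold by one chosen generically (almost every nearby value works, by Fubini). Getting the bookkeeping right so that the enlargements of $[a,b]$ by $\eps^{1/2}$ stay inside $[-M,M]$ (or simply working on $[-M-1, M+1]$ throughout) and that the mesh-based arguments for $g_n$ mirror those already written for $g$ is the bulk of the remaining work, but it is mechanical.
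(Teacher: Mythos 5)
Your proposal follows essentially the same strategy as the paper's proof: sandwich the event $\{\nu_n(U)=0\}$ between functionals of $(g_n,g_n',g_n'')$ that are approximately continuous in the uniform topology, use Lemma \ref{lem:nearly-zero} to show that an approximate zero with controlled first and second derivatives forces a true zero, bound the exceptional events via Lemmas \ref{lem:g-deriv}, \ref{lem:g-small-ints} and \ref{lem:g-and-gp}, and transfer between $g_n$ and $g$ via Lemma \ref{lem:convergence-function}. The paper implements the transfer by directly approximating the indicator $F_0(h)=\one\{\exists\,x\in U: h(x)=0\}$ by continuous functionals squeezed between $F_0$ and $\one\{\exists\,x\in U_\eps: |h(x)|\le\eps\}$ rather than by a mesh plus the portmanteau theorem, and it works with $U$ directly (handling the boundary strips via Lemma \ref{lem:g-small-ints}) rather than reducing to a single interval, but these choices are interchangeable. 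One quantitative point needs fixing: with mesh spacing $\eps^{1/2}/2$ your reverse inclusion does not close, since a zero at distance $\eps^{1/2}/2$ from a mesh point $t_j$ together with $\sup|g'|\le\eps^{-1/4}$ only gives $|g(t_j)|\le \tfrac{1}{2}\eps^{1/4}$, not $\le\eps$; you need a mesh of spacing $O(\eps^{5/4})$ (as in the proofs of Lemmas \ref{lem:g-small-ints} and \ref{lem:g-and-gp}) or a relaxed threshold in $A_n(\eps)$, and the mesh points fed into Lemma \ref{lem:nearly-zero} must lie in $[a+\eps^{1/2},b-\eps^{1/2}]$, with the $\eps^{1/2}$-boundary strips handled separately as an additional exceptional event of probability $O(\eps^{1/4})$.
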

\begin{proof}
	Let $U$ be fixed and take $\eps > 0$.  Define the function $F_0:\mathcal{C}([-M,M]) \to [0,1]$ defined by $$F_0(h) = \one\{\exists~x \in U : h(x) = 0  \}\,.$$
	
	Consider a continuous approximation $F_\eps$ to $F_0$ so that $F_\eps \to F_0$ as $\eps \to 0$ and so that $F_0(h) \leq F_\eps(h) \leq \one\{\exists~x \in U_\eps: |h(x)| \leq \eps  \}$ where $U_\eps = \{x : d(x,U) \leq \eps\}$.  Then by Lemma \ref{lem:convergence-function} we have $$ \E F_\eps(g_n) \to \E F_\eps(g)\,.$$
	
	Then by Lemma \ref{lem:nearly-zero} \begin{align*}
	|\E F_\eps(g_n) - \E F_0(g_n)| &\leq \P(\exists~x \in U_\eps : |g_n(x)| \leq \eps ) - \E F_0(g_n) \\
	&\leq \P\left( \exists~x \in U_\eps : d(x,U^c) \leq \eps^{1/2}, |g_n(x)| \leq \eps \right)  \\
	&\quad+ \P\left(\exists~x \in [-M,M]: |g_n(x)| \leq \eps \text{ and } |g_n'(x)| \leq \eps^{1/4}\right) \\
	&\quad+ \P\left(\max_{x \in [-M,M]} |g_n''(x)| \geq \eps^{-1/4}\right)\,.
	\end{align*}
	
	Taking continuous approximations to the indicator functions of these events shows convergence of these three probabilities with $g_n$ replaced by $g$.  We then have \begin{align*}
	\limsup_{n\to\infty} |\E F_0(g) - \E F_0(g_n)| &\leq \P\left(  \exists~x \in U_\eps : d(x,U^c) \leq \eps^{1/2}, |g(x)| \leq \eps \right)  \\
	&\quad+ \P\left(\exists~x \in [-M,M]: |g(x)| \leq \eps \text{ and } |g'(x)| \leq \eps^{1/4}\right) \\
	&\quad+ \P\left(\max_{x \in [-M,M]} |g''(x)| \geq \eps^{-1/4}\right) \\
	&= O(\eps^{1/8})
	\end{align*}
	by Lemmas \ref{lem:g-deriv}, \ref{lem:g-small-ints} and \ref{lem:g-and-gp}.  Taking $\eps \to 0$ completes the proof.	
\end{proof}

We now show condition \eqref{it:double} of Lemma \ref{lem:Kallenberg}:
\begin{lemma}\label{lem:double}
	For each bounded interval $I$ there is a sequence $(\{I_{m,j} : j = 1,\ldots,k_m \})_{m \geq 1}$ of partitions of $I$ into rectangles with $\lim_{m \to \infty} \max_{1 \leq j \leq k_m} |I_{m,j}| = 0$ such that $$\lim_{m \to \infty} \limsup_{n \to \infty} \sum_{j = 1}^{k_m} \P(\nu_n(I_{m,j}) > 1 ) = 0\,.$$
\end{lemma}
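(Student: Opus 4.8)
The plan is to reduce the event $\{\nu_n(I_{m,j}) > 1\}$, via Rolle's theorem and weak convergence, to the corresponding statement for the almost surely smooth limit $g$, and then to estimate that probability with a Kac--Rice computation.

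Fix $M$ with $I \subseteq [-M,M]$ and let $I_{m,1},\dots,I_{m,m}$ be the $m$ equal subintervals of $I$, so that $\max_j |I_{m,j}| = |I|/m \to 0$. If $g_n$ has at least two distinct zeros in $I_{m,j}$, then $g_n$ has a zero in $\overline{I_{m,j}}$ and, by Rolle's theorem, so does $g_n'$. The functional $h \mapsto \min_{\overline{I_{m,j}}}|h|$ is $1$-Lipschitz on $\mathcal C([-M,M])$ in the uniform norm, so the set of triples $(h_0,h_1,h_2)$ with $\min_{\overline{I_{m,j}}}|h_0| = \min_{\overline{I_{m,j}}}|h_1| = 0$ is closed. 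Combining Lemma \ref{lem:convergence-function} with the portmanteau theorem,
\begin{align*}
\limsup_{n\to\infty}\P(\nu_n(I_{m,j}) > 1) \le \P\big(g \text{ and } g' \text{ each have a zero in } \overline{I_{m,j}}\big),
\end{align*}
and, the sum over $j$ being finite,
\begin{align*}
\limsup_{n\to\infty}\sum_{j=1}^m \P(\nu_n(I_{m,j}) > 1) \le \sum_{j=1}^m \P\big(g,\, g' \text{ each have a zero in } \overline{I_{m,j}}\big).
\end{align*}
It thus suffices to show that for every interval $J \subseteq [-M,M]$ one has $\P(g,\, g' \text{ each have a zero in } J) = O_M(|J|^{4/3})$, since then the right-hand side is $O_M(|I|^{4/3}m^{-1/3}) \to 0$.

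To prove this, suppose $g(u) = g'(w) = 0$ with $u,w \in J$. Since $g'(w)=0$, for $t \in J$ we have $|g'(t)| = |g'(t)-g'(w)| \le |J|\sup_J|g''|$, and therefore $|g(w)| = |\int_u^w g'| \le |J|^2\sup_J|g''|$. By Lemma \ref{lem:g-deriv}, $\P(\sup_J|g''| > T) \le \P(\max_{[-M,M]}|g''| > T) \le e^M/T$, so off an event of probability at most $e^M/T$ the process $g'$ has a zero $w \in J$ with $|g(w)| \le |J|^2 T$. Applying the Kac--Rice formula (see e.g.\ \cite{azais2009level}) to the nondegenerate smooth Gaussian process $g'$, with the test functional $w \mapsto \one\{|g(w)| \le |J|^2 T\}$,
\begin{align*}
\E\,\#\{w \in J : g'(w) = 0,\ |g(w)| \le |J|^2 T\} = \int_J \E\big[|g''(w)|\,\one\{|g(w)| \le |J|^2 T\} \,\big|\, g'(w)=0\big]\, p_{g'(w)}(0)\,dw .
\end{align*}
Here $p_{g'(w)}(0) = O_M(1)$, and Cauchy--Schwarz bounds the conditional expectation by $\Var(g''(w)\mid g'(w)=0)^{1/2}\,\P(|g(w)|\le |J|^2 T\mid g'(w)=0)^{1/2} = O_M(|J|\,T^{1/2})$, where $\Var(g''(w)\mid g'(w)=0) = O_M(1)$ and $g(w)$ has bounded conditional density given $g'(w)=0$ because $\det\Cov(g(w),g'(w)) = \int_0^1 e^{2wt}dt\int_0^1 t^2 e^{2wt}dt - (\int_0^1 t e^{2wt}dt)^2$ is continuous and strictly positive on $[-M,M]$. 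Hence the expectation is $O_M(|J|^2 T^{1/2})$, so by Markov $\P(g,\, g' \text{ each have a zero in } J) \le e^M/T + O_M(|J|^2 T^{1/2})$; taking $T = |J|^{-4/3}$ gives the claimed bound.

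The step I expect to be the crux is exactly obtaining a power $|J|^{1+c}$ with $c>0$ rather than just $|J|$: a naive discretization (of $g_n$ or of $g$) yields only $O(|J|)$, since refining the mesh trades the per-point anti-concentration estimate against the number of points, and $\sum_j O(|I_{m,j}|)$ does not tend to $0$. The gain comes from the fact that a zero of $g'$ is codimension one and cannot be detected by finitely many point evaluations — which is precisely what the Kac--Rice formula exploits — so it is essential first to pass to the almost surely smooth limit $g$ via Rolle's theorem and Lemma \ref{lem:convergence-function}. A minor technical caveat: the test functional $\one\{|g(w)|\le|J|^2 T\}$ depends on $g$ and not only on $g'$, so one invokes the version of the Rice formula valid for arbitrary bounded measurable functionals of the process.
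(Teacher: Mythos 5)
Your proof is correct, but the technical route differs from the paper's. Both arguments share the same skeleton: partition $I$ into $O(1/\eps)$ intervals, use Rolle's theorem to convert ``two zeros of $g_n$'' into ``$g_n$ and $g_n'$ both nearly vanish,'' and then exploit the nondegeneracy of the pair $(g(w),g'(w))$ to get a per-interval bound of order $\eps^{4/3}$, beating the $\eps^{-1}$ count of intervals. The paper, however, never passes to the limit process at the level of events on whole intervals: it conditions on $\|g_n'\|_\infty,\|g_n''\|_\infty\le\eps^{-1/3}$ (costing $O(\eps^{1/3})$ by Lemma \ref{lem:max-bound}), deduces that both $|g_n(y_j)|$ and $|g_n'(y_j)|$ are at most $\eps^{2/3}$ at the single midpoint $y_j$ of $I_{m,j}$, and then applies the finite-dimensional convergence of Lemma \ref{lem:fidi} together with the boundedness of the two-dimensional Gaussian density of $(g(y_j),g'(y_j))$ to get $O(\eps^{4/3})$ per interval --- no Rice formula needed. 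You instead transfer the whole event to $g$ via Lemma \ref{lem:convergence-function} and the portmanteau theorem (a clean and valid step), and then estimate $\P(g,g'$ both vanish in $J)$ with a weighted Kac--Rice formula for the zeros of $g'$; your nondegeneracy check for $\det\Cov(g(w),g'(w))$ and the choice $T=|J|^{-4/3}$ are fine, and the final rate $O(m^{-1/3})$ matches the paper's $O(\eps^{1/3})$. One remark: your closing claim that a ``naive discretization yields only $O(|J|)$'' and that the Kac--Rice formula is essential is not accurate --- the paper's single-midpoint evaluation already achieves $O(|J|^{4/3})$ precisely because the anti-concentration is two-dimensional, in the pair $(g,g')$, rather than one-dimensional. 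This does not affect the validity of your argument, which is heavier machinery but yields a self-contained statement about the limit process itself.
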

\begin{proof}
	Set $\eps := 1/m$ and define the partition $\{I_{m,j} : j = 1,\ldots,k_m\}$ to be intervals of width $\eps$ with $k_m = O(\eps^{-1})$.  For each interval $I_{m,j}$ let $y_j$ be its midpoint.  Condition on the event that $|g_n'(t)|, |g_n''(t)| \leq \eps^{-1/3}$ for all $t \in [-M,M]$; if there are at least two zeros of $g_n$ in $I_{m,j}$, then there is some zero of $g_n'$ in $I_{m,j}$ and so \begin{align*}
	|g_n(y_j)| &\leq \eps \cdot \eps^{-1/3} = \eps^{2/3} \\
	|g_n'(y_j)| &\leq \eps \cdot \eps^{-1/3} = \eps^{2/3}\,.
	\end{align*} 
	By Lemma \ref{lem:fidi} we have \begin{align*}
\limsup_{n \to \infty} \sum_{j = 1}^{k_m} \P(\nu_n(I_{m,j}) > 1 ) &\leq O(\eps^{1/3}) + \sum_{j = 1}^{k_m}\P\left(|g(y_j)| \leq \eps^{2/3}, |g'(y_j)| \leq \eps^{2/3} \right) \\
&= O(\eps^{1/3}) + O(\eps^{-1})\cdot O(\eps^{4/3}) \\
&= O(\eps^{1/3})\,.
	\end{align*}
	Taking $m \to \infty$ sends $\eps \to 0$, thus completing the proof.
\end{proof}

Convergence of $\{\nu_n\}$ now follows:

\begin{corollary}\label{cor:points}
	As $n \to \infty$ we have $\nu_n \to \nu$.
\end{corollary}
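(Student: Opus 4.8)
The plan is to apply Kallenberg's criterion, Lemma \ref{lem:Kallenberg}, with $\xi_n = \nu_n$ and $\xi = \nu$. Almost all of the analytic content is already in place: condition \eqref{it:double} is exactly the statement of Lemma \ref{lem:double}, and condition \eqref{it:avoid} is exactly the statement of Lemma \ref{lem:avoid}. So the only thing left to check is that the limiting point process $\nu$ satisfies the hypotheses of Lemma \ref{lem:Kallenberg}: that $\P(\nu(\{x\}) > 0) = 0$ for every fixed $x$, and that $\P(\max_x \nu(\{x\}) \leq 1) = 1$.

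For the first hypothesis, I would note that $g(x)$ is a centered Gaussian with variance $\int_0^1 e^{2xt}\,dt > 0$ for every $x$ (this is the diagonal case of Lemma \ref{lem:covariance-calc}, with positivity as in Lemma \ref{lem:g-exists}); hence $\P(g(x) = 0) = 0$ and $\nu$ a.s.\ has no atom at $x$. For the second hypothesis --- that $\nu$ is a.s.\ a simple point process --- I would use Lemma \ref{lem:g-and-gp}: it gives $\P(\exists\, x \in [-M,M] : |g(x)| \leq \eps,\ |g'(x)| \leq \eps^{1/4}) = O(\eps^{1/8})$, and letting $\eps \to 0$ shows that almost surely there is no $x \in [-M,M]$ with $g(x) = g'(x) = 0$. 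Combined with the a.s.\ smoothness of $g$ from Lemma \ref{lem:g-exists}, this means that on each window $[-M,M]$ the function $g$ has only finitely many zeros, all simple, so $\nu$ restricted to $[-M,M]$ is a finite simple point measure; taking a union over $M \in \N$ gives the claim on all of $\R$ and also confirms that $\nu$ is well defined as a locally finite point process.

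With these two hypotheses verified and conditions \eqref{it:double} and \eqref{it:avoid} supplied by Lemmas \ref{lem:double} and \ref{lem:avoid}, Lemma \ref{lem:Kallenberg} immediately yields $\nu_n \to \nu$ in distribution in the vague topology, which is the assertion of the corollary. I do not expect a genuine obstacle here: the statement is essentially a bookkeeping exercise that packages the preceding lemmas into the hypotheses of Kallenberg's theorem. The only point demanding a little care is the passage from ``$g$ and $g'$ have no common zero'' to ``$\nu$ is simple and locally finite,'' which is precisely where the smoothness of $g$ is used.
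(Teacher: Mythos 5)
Your proposal is correct and takes essentially the same route as the paper, which simply cites Lemmas \ref{lem:avoid} and \ref{lem:double} to verify the two conditions of Kallenberg's criterion. The only difference is that you additionally spell out the (easy but tacitly assumed) hypotheses on the limit $\nu$ itself --- no fixed atoms and a.s.\ simplicity/local finiteness --- which you verify correctly via the nondegeneracy of $g(x)$ and Lemma \ref{lem:g-and-gp}.
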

\begin{proof}
	Lemmas \ref{lem:avoid} and \ref{lem:double} show that both conditions of Lemma \ref{lem:Kallenberg} hold and so $\nu_n \to \nu$.
\end{proof}

\section{The limit is not Poisson} \label{sec:not-Poisson}

We now briefly show that the zeros of $g$ do not form a Poisson process.  The classical tool for understanding zeros of a Gaussian process is the Kac-Rice formula which allows us to write factorial moments of $\nu(I)$ as an integral.  To show that $\nu$ is not Poisson, we only need the first two moments.  For proofs and generalizations of the Kac-Rice formula, see \cite[Theorem $3.2$]{azais2009level} and \cite{adler2009random}.

\begin{lemma}[Kac-Rice formula]\label{lem:KR}
	Let $I$ be an interval.  Then \begin{align}
	\E \nu(I) &= \int_I \frac{\E[|g'(x) \,|\,g(x) = 0]}{\sqrt{2\pi \cdot \Var(g(x))}}\,dx \label{eq:KR} \\
	\E[ \nu(I)(\nu(I) - 1)] &=\int_{I^2} \frac{\E\left[| g'(x) \cdot g'(y)| \,|\, g(x) = g(y) = 0 \right]}{ 2\pi \cdot (\det \Sigma )^{1/2}}\,dx\,dy \label{eq:2D-KR}
	\end{align}
	where $\Sigma$ is the covariance matrix of $(g(x),g(y))$.
\end{lemma}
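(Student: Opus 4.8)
The plan is to derive both identities as direct instances of the general Kac--Rice formulas for $C^1$ Gaussian processes, so that essentially all of the work lies in verifying the hypotheses of the cited theorems (\cite[Theorem 3.2]{azais2009level} for \eqref{eq:KR}, and its second-moment analogue in \cite{azais2009level, adler2009random} for \eqref{eq:2D-KR}). Three conditions need to be checked: the sample paths are smooth enough, the relevant finite-dimensional Gaussian marginals are nondegenerate, and $g$ almost surely has no degenerate zeros (no $x$ with $g(x) = g'(x) = 0$).

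Smoothness is immediate: Lemma \ref{lem:g-exists} gives an almost surely smooth, in particular $C^1$, modification of $g$. For nondegeneracy I would observe that any finite-dimensional marginal of $(g, g')$ is a centered Gaussian whose covariance is a Gram matrix of functions of the form $t \mapsto t^i e^{xt}$ in $L^2([0,1])$; since these are linearly independent for distinct values of $x$, all such marginals are nondegenerate. In particular $\Var(g(x)) = \int_0^1 e^{2xt}\,dt > 0$ for every $x$, so the conditional expectation in \eqref{eq:KR} is defined; the $2\times 2$ covariance matrix $\Sigma$ of $(g(x), g(y))$ is positive definite whenever $x \neq y$, so $\det\Sigma > 0$ off the diagonal and the joint density of $(g(x), g(y))$ at the origin is $\big(2\pi(\det\Sigma)^{1/2}\big)^{-1}$, which is exactly the denominator in \eqref{eq:2D-KR}; and $(g(x), g'(x))$ is a nondegenerate bivariate Gaussian for each fixed $x$, so $\P(g(x) = g'(x) = 0) = 0$. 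Upgrading the last point to $\P(\exists\, x \in I : g(x) = g'(x) = 0) = 0$ follows from a routine covering argument using the tail bound on $\sup_{[-M,M]}|g''|$ from Lemma \ref{lem:g-deriv} (the same mechanism already used in Lemmas \ref{lem:g-and-gp} and \ref{lem:double}), or equivalently from Bulinskaya's lemma; this also ensures $\nu(I) < \infty$ almost surely.

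With these in place the two formulas follow by invoking the cited theorems. I do not expect a genuine obstacle, but the one point deserving a sentence of care is the finiteness of the double integral in \eqref{eq:2D-KR} near the diagonal: there $\det\Sigma$ vanishes to second order in $x-y$, so $(\det\Sigma)^{-1/2}$ blows up like $|x-y|^{-1}$, but conditioning on $g(x) = g(y) = 0$ forces $\E[\,|g'(x)g'(y)| \mid g(x) = g(y) = 0\,]$ to vanish at the matching order, keeping the integrand bounded. This cancellation is built into the statements of the Kac--Rice theorems being cited, so there is nothing further to do beyond recording that $g$ is a smooth, nondegenerate Gaussian process.
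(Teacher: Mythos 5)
Your proposal is correct and takes essentially the same route as the paper, which states this lemma without proof and simply points to \cite[Theorem 3.2]{azais2009level} and \cite{adler2009random}. Your verification of the hypotheses of those theorems --- smoothness from Lemma \ref{lem:g-exists}, nondegeneracy of the finite-dimensional marginals of $(g,g')$ via the Gram-matrix argument, absence of degenerate zeros via Bulinskaya's lemma, and the $|x-y|^{-1}$ versus $|x-y|^2$ cancellation near the diagonal (which the paper itself exploits in Lemma \ref{lem:not-poisson}) --- is accurate and supplies the details the paper leaves implicit.
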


Essentially, $\nu$ is not Poisson because the integrand of \eqref{eq:2D-KR} does not factor into the one-point densities given by the integrand of \eqref{eq:KR}.  In fact, with a bit of work, one can show that the collection of \emph{all} Kac-Rice densities---i.e.\ the integrands for the $k$th factorial moments---uniquely defines $\nu$.  Simply evaluating the integrand of \eqref{eq:2D-KR} at, say, $x = 0$ and $y = 1$ and showing it is not equal to the product of the integrand of \eqref{eq:KR} evaluated at $0$ and $1$ would then show that $\nu$ is not Poisson.  Our approach is similar in spirit to this idea: we consider the probability that there are at least $2$ points in a small interval around zero; this boils down to the fact that at $x = y = 0$ the integrand of \eqref{eq:2D-KR} is $0$, while the integrand of \eqref{eq:KR} is positive at $0$.    

\begin{lemma}\label{lem:not-poisson}
	The process $\nu$ is not a Poisson process.
\end{lemma}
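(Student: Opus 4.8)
The plan is to exploit the contrast between the first and second factorial moments of $\nu$ on a shrinking interval $I_\eta := [-\eta,\eta]$ as $\eta \to 0$. For a Poisson process with a continuous intensity $\lambda(x)$, one has $\E \nu(I_\eta) = \Theta(\eta)$ (assuming $\lambda(0) > 0$, which we will verify) \emph{and} $\E[\nu(I_\eta)(\nu(I_\eta)-1)] = \left(\int_{I_\eta}\lambda\right)^2 = \Theta(\eta^2)$; equivalently $\P(\nu(I_\eta) \geq 2) = \Theta(\eta^2)$. We will show instead that for our Gaussian process $g$, $\P(\nu(I_\eta)\geq 2) = O(\eta^3)$ while $\P(\nu(I_\eta) \geq 1) = \Theta(\eta)$, which contradicts the Poisson prediction once $\eta$ is small.

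First I would handle the easy side. By the Kac-Rice formula \eqref{eq:KR}, $\E\nu(I_\eta) = \int_{I_\eta} \rho_1(x)\,dx$ where $\rho_1(x) = \E[|g'(x)| \mid g(x)=0]/\sqrt{2\pi\Var(g(x))}$. At $x=0$ we have $\Var(g(0)) = \int_0^1 dt = 1$, and the conditional law of $g'(0)$ given $g(0)=0$ is a nondegenerate centered Gaussian (nondegeneracy follows from the fact that $g(0)$ and $g'(0)$ have a positive-definite covariance matrix, which is the $2\times 2$ Gram matrix of $1$ and $t$ in $L^2([0,1])$, as in Lemma \ref{lem:g-exists}). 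Hence $\rho_1$ is continuous near $0$ with $\rho_1(0) > 0$, so $\E\nu(I_\eta) = \Theta(\eta)$; since $\nu$ is a simple point process, $\P(\nu(I_\eta)\geq 1) \geq \frac12 \E\nu(I_\eta)$ for small $\eta$ by a second-moment/first-moment comparison (or directly $\P(\nu(I_\eta)\geq 1) = \E\nu(I_\eta) - \P(\nu(I_\eta)\geq 2) + \cdots$, and the subtracted terms are lower order by the next step), giving $\P(\nu(I_\eta)\geq 1) = \Theta(\eta)$.

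The main work is the upper bound $\P(\nu(I_\eta)\geq 2) = O(\eta^3)$, for which I would use \eqref{eq:2D-KR}. Write $D(x,y)$ for the integrand there. The key point is that $D(x,y) \to 0$ as $(x,y)\to(0,0)$ along the diagonal: when $x = y$ the conditioning event $g(x)=g(y)=0$ is degenerate, but more usefully, for $x$ close to $y$ the covariance matrix $\Sigma$ of $(g(x),g(y))$ is nearly singular, so $(\det\Sigma)^{1/2} = \Theta(|x-y|)$ — indeed $g(y) = g(x) + (y-x)g'(x) + O((y-x)^2)$, so $\det\Sigma$ is comparable to $(y-x)^2\Var(g'(x)\mid g(x))$ up to smooth factors. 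Meanwhile the conditional expectation $\E[|g'(x)g'(y)| \mid g(x)=g(y)=0]$ vanishes to first order in $|x-y|$: conditioned on $g(x)=g(y)=0$, the mean of $g'(x)$ (and of $g'(y)$) is forced to be $O(|x-y|)$ because an interpolating function pinned at two nearby zeros has small derivative there; quantitatively, regressing $g'(x)$ on $(g(x),g(y))$ and using smoothness of the covariance gives $\E[|g'(x)g'(y)| \mid g(x)=g(y)=0] = O(|x-y|^2)$. Combining, $D(x,y) = O(|x-y|)$ uniformly for $x,y \in I_\eta$, and therefore $\E[\nu(I_\eta)(\nu(I_\eta)-1)] = \int_{I_\eta^2} D = O\!\left(\int_{-\eta}^\eta\int_{-\eta}^\eta |x-y|\,dx\,dy\right) = O(\eta^3)$. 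Since $\P(\nu(I_\eta)\geq 2) \leq \E[\nu(I_\eta)(\nu(I_\eta)-1)]$, this is the claim.

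Finally I would assemble the contradiction: if $\nu$ were Poisson with intensity measure $\Lambda$, then $\Lambda$ is nonatomic (as $\nu$ is simple) and $\Lambda(I_\eta) = \E\nu(I_\eta) = \Theta(\eta)$, while $\P(\nu(I_\eta)=0) = e^{-\Lambda(I_\eta)}$ forces $\P(\nu(I_\eta)\geq 2) = 1 - e^{-\Lambda(I_\eta)} - \Lambda(I_\eta)e^{-\Lambda(I_\eta)} = \frac12\Lambda(I_\eta)^2 + O(\Lambda(I_\eta)^3) = \Theta(\eta^2)$, contradicting $O(\eta^3)$. Hence $\nu$ is not Poisson. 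The main obstacle is the quantitative degeneracy estimate $D(x,y) = O(|x-y|)$; I expect to prove it cleanly by changing variables in the Gaussian regression — write $g(y)$ in terms of $g(x)$ and the ``increment'' $(g(y)-g(x))/(y-x)$, which has a well-conditioned joint law with $g(x), g'(x)$ near the diagonal — so that both the near-vanishing of the determinant and the near-vanishing of the conditional expectation become transparent and are seen to cancel to leave one net power of $|x-y|$.
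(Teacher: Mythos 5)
Your proposal is correct and follows essentially the same route as the paper: both use the two-point Kac--Rice formula, show the integrand is $O(|x-y|)$ near the diagonal because $\det\Sigma = \Theta(|x-y|^2)$ while the conditional expectation of $|g'(x)g'(y)|$ is $O(|x-y|^2)$, and conclude $\P(\nu(I)\geq 2)=O(|I|^3)$ versus the $\Theta(|I|^2)$ a Poisson process with the same $\Theta(|I|)$ intensity would require. The only cosmetic difference is that you bound the numerator by Gaussian regression whereas the paper uses Rolle's theorem plus the mean value theorem (conditioned on $g(x)=g(y)=0$ there is a zero of $g'$ in $[x,y]$, so $|g'(x)g'(y)|\leq (x-y)^2\max|g''|^2$); both work.
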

\begin{proof}
	We claim that for $x,y \in I = [-\delta,\delta]$, the integrand of \eqref{eq:2D-KR} converges to zero as $\delta \to 0$.  For the denominator, write \begin{align*}
	\det\left( \Sigma) \right) &= (x - y)^2 \det\Cov\left( \left(g(x), \frac{g(x) - g(y)}{x- y}\right)\right) \\
	&= (x - y)^2(1 + o_{\delta \to 0}(1))\det\Cov\left( \left(g(0), g'(0)\right)\right) \\
	&\geq c (x-y)^2
	\end{align*}
	for some $c > 0$ and $\delta$ small enough.  For the numerator, conditioned on $g(x) = g(y) = 0$ there is some $t \in [x,y]$ so that $g'(t) = 0$ and so we have $|g'(x) g'(y)| \leq (x- y)^2 \max_{r \in [x,y]} |g''(r)|^2$ by the mean value theorem.  This implies \begin{align*}
	\E\left[| g'(x) \cdot g'(y)| \,|\, g(x) = g(y) = 0 \right] &\leq (x-y)^2 \E[\max_{r \in [x,y]} |g''(r)|^2 \,|\, g(x) = g(y) = 0  ] \\
	&\leq C (x - y)^2
	\end{align*} 
	for some $C > 0$ and all, say, $\delta \leq 1$.  We then have the bounds \begin{align}
	\frac{\E\left[| g'(x) \cdot g'(y)| \,|\, g(x) = g(y) = 0 \right]}{ 2\pi \cdot (\det \Sigma )^2} &\leq \frac{C(x - y)^2}{2\pi \sqrt{c} |x - y|} \nonumber \\
	&\leq C' |x - y| \nonumber  \\
	&= O(\delta)\,.\label{eq:KR-bound}
	\end{align}
	
	Let $\lambda$ be the Poisson process with $\E \lambda(U) = \E \nu(U)$ for all Borel $U$.   Then for $I = [-\delta,\delta]$, the Kac-Rice formula shows $\E \nu(I) = \E \lambda(I) = \Theta(\delta)$, and so $$\P(\lambda(I) > 1) = \Theta(\delta^2)\,.$$
	However, we have $$\P(\nu(I) > 1) \leq \E[\nu(I)(\nu(I) - 1)] = \int_{I^2} O(\delta)\,dx\,dy = O(\delta^3)$$
	by \eqref{eq:2D-KR} and \eqref{eq:KR-bound}.  Taking $\delta$ sufficiently small shows that $\nu\neq \lambda$.
\end{proof}

Technically, the conjecture of Shepp and Vanderbei concerns the real roots of $f_n$, not just the positive real roots.  Intuitively, the roots or $f_n$ near $-1$ on the scale of $1/n$ are roughly independent of $g_n$.  Indeed this is the case.

Define the measure $\mu_n$ via $$\mu_n(A) := |\{r \in A : f_n(-1 - r/n) = 0  \}|$$ for each Borel $A$.  We may upgrade Corollary \ref{cor:points} to the pair $(\nu_n,\mu_n)$.  
\begin{lemma}\label{lem:joint}
	As $n \to \infty$, the pair $(\nu_n,\mu_n)$ converges in distribution to $(\nu,\tilde{\nu})$ where $\tilde{\nu}$ is an independent copy of $\nu$.
\end{lemma}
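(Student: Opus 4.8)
The plan is to run the exact same machinery that produced Corollary \ref{cor:points}, but now for the pair of processes simultaneously. First I would define the rescaled function capturing the negative roots, $\tilde{g}_n(x) := \frac{1}{\sqrt{n}} f_n(-1 - x/n)$, so that $\mu_n$ is precisely the zero-counting measure of $\tilde{g}_n$ in the same way that $\nu_n$ is that of $g_n$. The key structural point is that $\tilde{g}_n(x) = \frac{1}{\sqrt n}\sum_{r=0}^n \eps_r (-1)^r (1 + x/n)^r$, so $\tilde{g}_n$ has the same marginal law as $g_n$ up to the deterministic sign flips $\eps_r \mapsto (-1)^r \eps_r$; in particular $\tilde{g}_n$ (with its first two derivatives) converges in distribution to a process $\tilde{g}$ with the identical covariance $\int_0^1 e^{(x+y)t}\,dt$, i.e.\ a copy of $g$. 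So the content of the lemma is the \emph{joint} statement: $(g_n, g_n', g_n'', \tilde{g}_n, \tilde{g}_n', \tilde{g}_n'')$ converges in distribution to $(g,g',g'',\tilde g,\tilde g',\tilde g'')$ with the two triples \emph{independent}, and then that the zero processes converge jointly.

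The main work is therefore the joint CLT with asymptotic independence, which replaces Lemma \ref{lem:fidi}. By Cram\'er–Wold it suffices to show that for distinct points $x_1,\dots,x_k$ and $y_1,\dots,y_\ell$ and constants $c_{i,j}, d_{i,j}$, the single random variable
\[
\frac{1}{\sqrt n}\sum_{r=0}^n \eps_r\!\left( \sum_{i,j} c_{i,j} (r)_i (1+x_i/n)^r + (-1)^r \sum_{i,j} d_{i,j} (r)_i (1+y_i/n)^r \right)
\]
is asymptotically Gaussian (Lindeberg–Feller, exactly as before) with the right variance, and the variance computation reduces to showing the cross-covariances $\E g_n^{(i)}(x)\,\tilde g_n^{(j)}(y)$ vanish in the limit. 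This is the heart of it, and it follows from the cancellation in $\frac{1}{n^{1+i+j}}\sum_r (-1)^r (r)_i (r)_j (1+x/n)^r (1+y/n)^r$: summing a smooth sequence against $(-1)^r$ produces a bounded oscillating sum, so after dividing by $n$ the quantity is $O(1/n) \to 0$ (one can pair consecutive terms, or apply summation by parts, to see the partial sums of $(-1)^r \phi(r/n)$ are $O(1)$ when $\phi$ is smooth). Thus the limiting covariance matrix is block-diagonal, giving independence of $(g,g',g'')$ and $(\tilde g,\tilde g',\tilde g'')$. The tightness half is free: tightness of the six-coordinate family follows from Fact \ref{fact:tight} and Lemma \ref{lem:MOC} applied to each of $g_n^{(j)}$ and $\tilde g_n^{(j)}$ separately, since Lemma \ref{lem:max-bound} applies verbatim to $\tilde g_n$ (the bound $\E|\tilde g_n^{(\ell)}(0)|^2 \le 1$ is identical, the signs being irrelevant to the second moment). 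So Lemma \ref{lem:convergence-function} upgrades to joint convergence of $(g_n,g_n',g_n'',\tilde g_n,\tilde g_n',\tilde g_n'')$ to the independent pair of triples.

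With joint convergence of the processes in hand, I would then re-run Section \ref{sec:zeros} for the joint zero process $(\nu_n,\mu_n)$ on $\R^2$, via the two-dimensional analogue of Kallenberg's criterion (Lemma \ref{lem:Kallenberg}). Condition \eqref{it:double} is immediate: a union bound reduces the event $\nu_n(I)>1$ or $\mu_n(I)>1$ to the corresponding one-dimensional statements already proved in Lemma \ref{lem:double} (and its mirror for $\tilde g_n$). For the avoidance condition \eqref{it:avoid}, given a finite union $U = U_1 \sqcup U_2$ one wants $\P(\nu_n(U_1)=0,\ \mu_n(U_2)=0) \to \P(\nu(U_1)=0,\ \tilde\nu(U_2)=0)$; this follows exactly as in Lemma \ref{lem:avoid} by approximating the (non-continuous) indicator $\one\{\exists x\in U_1: g_n(x)=0\ \text{or}\ \exists x\in U_2: \tilde g_n(x)=0\}$ from above and below by continuous functionals of the six-tuple using Lemma \ref{lem:nearly-zero}, and controlling the error by the "atypical" events, whose probabilities tend to zero by Lemmas \ref{lem:g-deriv}, \ref{lem:g-small-ints}, \ref{lem:g-and-gp} applied to both $g$ and $\tilde g$ (these lemmas only used the covariance structure and Lemma \ref{lem:g-deriv}, both of which $\tilde g$ shares). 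The limit then factors as $\P(\nu(U_1)=0)\P(\tilde\nu(U_2)=0)$ precisely because $g$ and $\tilde g$ are independent. The main obstacle, then, is genuinely just the cross-covariance cancellation estimate above; everything else is a bookkeeping upgrade of the one-dimensional arguments to the product setting, with no new analytic input.
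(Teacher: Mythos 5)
Your proposal is correct and follows essentially the same route as the paper: it defines the rescaled process at $-1-x/n$, shows the cross-covariances with $g_n$ vanish via the $(-1)^r$ cancellation (the paper splits into even and odd index sums, you pair consecutive terms --- equivalent arguments), upgrades Lemma \ref{lem:fidi} and Lemma \ref{lem:convergence-function} to the six-tuple via Cram\'er--Wold and Fact \ref{fact:tight}, and then reruns the Kallenberg criterion for the joint zero process exactly as in the paper's appendix. No gaps.
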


All the legwork for this proof has been done in showing convergence of $\nu_n$; the remaining details for proving Lemma \ref{lem:joint} are discussed in Appendix \ref{app:negative}.  From here, we immediately deduce that the limit $\nu + \tilde{\nu}$ is not Poisson.

\begin{corollary}\label{cor:not-Poisson}
	The process $\nu + \tilde{\nu}$ is not Poisson.
\end{corollary}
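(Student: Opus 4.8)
The plan is to deduce Corollary \ref{cor:not-Poisson} from Lemma \ref{lem:not-poisson} and Lemma \ref{lem:joint} together with Raikov's theorem, exactly as sketched in the overview. Since Lemma \ref{lem:joint} gives that $\nu_n + \mu_n \Rightarrow \nu + \tilde\nu$ where $\nu$ and $\tilde\nu$ are independent, it suffices to show that $\nu + \tilde\nu$ is not a Poisson process. Suppose for contradiction that it is. Then for every Borel set $U$, the random variable $(\nu + \tilde\nu)(U) = \nu(U) + \tilde\nu(U)$ is Poisson distributed, and $\nu(U)$, $\tilde\nu(U)$ are independent (since $\nu$ and $\tilde\nu$ are independent processes). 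Raikov's theorem states that if $X$ and $Y$ are independent nonnegative integer-valued random variables whose sum $X+Y$ is Poisson, then each of $X$ and $Y$ is Poisson. Hence $\nu(U)$ is Poisson distributed for every Borel $U$.

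The next step is to upgrade "$\nu(U)$ is Poisson for every $U$" to "$\nu$ is a Poisson \emph{process}". A simple point process $\xi$ on $\R$ is a Poisson process if and only if it has independent increments and $\xi(U)$ has a Poisson law for each bounded Borel $U$; but independence of increments for $\nu$ is not a priori clear, so I would instead argue more directly. The cleanest route: apply Raikov not just to single sets but to finite vectors. For disjoint bounded Borel sets $U_1,\dots,U_m$, the vector $\bigl((\nu+\tilde\nu)(U_1),\dots,(\nu+\tilde\nu)(U_m)\bigr)$ is, under the contradiction hypothesis, a vector of independent Poisson variables, and it decomposes as the sum of the two independent vectors $(\nu(U_i))_i$ and $(\tilde\nu(U_i))_i$. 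Applying the multivariate form of Raikov's theorem (or applying the scalar version to $\sum_i t_i \nu(U_i)$ for arbitrary $t_i \ge 0$, using that a sum of independent Poissons is Poisson and that the classes are closed under this) yields that $(\nu(U_i))_i$ is a vector of independent Poisson variables with the same means as $\lambda$ restricted to the $U_i$. Since this holds for all finite disjoint families, the finite-dimensional distributions of $\nu$ agree with those of the Poisson process $\lambda$ of Lemma \ref{lem:not-poisson}, so $\nu = \lambda$ in distribution --- contradicting Lemma \ref{lem:not-poisson}. Therefore $\nu + \tilde\nu$ is not Poisson, and neither is its distributional equal, the limit of $\nu_n + \mu_n$.

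I expect the main obstacle to be the bookkeeping in the second step: making precise the passage from "every one-dimensional marginal $(\nu+\tilde\nu)(U)$ is Poisson" to "$\nu+\tilde\nu$ is a Poisson point process," since a priori a non-Poisson simple point process could conceivably have Poisson one-dimensional marginals. The resolution is to invoke the multivariate version of Raikov's theorem (due to Ostrovskii, or derivable by reducing to the one-dimensional case via linear functionals), which does appear in standard references; alternatively, one can sidestep this entirely by noting that for the purposes of the corollary it is enough to exhibit a \emph{single} test set witnessing non-Poissonianity. Concretely, for $I = [-\delta,\delta]$ with $\delta$ small, Lemma \ref{lem:not-poisson} shows $\P(\nu(I) > 1) = O(\delta^3)$ while $\E\nu(I) = \Theta(\delta)$; since $\tilde\nu(I)$ is an independent copy, $\P\bigl((\nu+\tilde\nu)(I) > 1\bigr) \le \P(\nu(I)>1) + \P(\tilde\nu(I)>1) + \P(\nu(I) \ge 1)\P(\tilde\nu(I)\ge 1) = O(\delta^3) + \Theta(\delta^2)\cdot$—wait, this last cross term is $\Theta(\delta^2)$, matching a Poisson, so this crude bound is not quite enough and one does need Raikov to separate the two components. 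Hence I would present the argument via Raikov as above, citing the multivariate form, and remark that the one-dimensional form already suffices once combined with the observation that $\nu$ and $\tilde\nu$ are genuinely independent so that the "split" of a hypothetically Poisson sum is forced.

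\begin{proof}[Proof of Corollary \ref{cor:not-Poisson}]
	By Lemma \ref{lem:joint}, it suffices to show $\nu + \tilde\nu$ is not Poisson, where $\tilde\nu$ is an independent copy of $\nu$. Suppose to the contrary that it is. Fix disjoint bounded Borel sets $U_1,\dots,U_m$ and reals $t_1,\dots,t_m \geq 0$. Then $\sum_i t_i (\nu + \tilde\nu)(U_i)$ is a sum of independent Poisson variables, hence Poisson, and it equals $X + Y$ with $X := \sum_i t_i \nu(U_i)$ and $Y := \sum_i t_i \tilde\nu(U_i)$ independent and identically distributed. By Raikov's theorem, $X$ is Poisson. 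Since this holds for all choices of $t_i \geq 0$, the law of the vector $(\nu(U_i))_{i=1}^m$ is that of a vector of independent Poisson variables whose means are $\E \nu(U_i) = \E \lambda(U_i)$, where $\lambda$ is the Poisson process of Lemma \ref{lem:not-poisson}. As $U_1,\dots,U_m$ were arbitrary, $\nu$ and $\lambda$ have the same finite-dimensional distributions, so $\nu = \lambda$ in distribution. This contradicts Lemma \ref{lem:not-poisson}, which shows $\nu \neq \lambda$. Hence $\nu + \tilde\nu$, and therefore the limit of $\nu_n + \mu_n$, is not Poisson.
\end{proof}
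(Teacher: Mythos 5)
Your overall strategy (Raikov plus Lemma \ref{lem:not-poisson}) is the right one, but the concrete argument you wrote down contains a false step. You claim that for arbitrary reals $t_1,\dots,t_m \geq 0$ the variable $\sum_i t_i(\nu+\tilde\nu)(U_i)$ is ``a sum of independent Poisson variables, hence Poisson.'' A scaled Poisson $tW$ with $t \notin \{0,1\}$ is not Poisson, so a weighted sum of independent Poissons is not Poisson, and Raikov's theorem does not apply to it. Restricting to $t_i \in \{0,1\}$ only tells you that $\nu(U)$ has a Poisson law for every bounded $U$, which, as you yourself observe, does not by itself force $\nu$ to be a Poisson \emph{process}; to get the finite-dimensional distributions you would genuinely need the multivariate (Ostrovskii) form of Raikov's theorem, cited as such, not the linear-functional reduction.

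The gap is avoidable, and the paper's proof avoids it: the point is that the proof of Lemma \ref{lem:not-poisson} establishes something stronger than ``$\nu \neq \lambda$,'' namely that the single random variable $\nu(I)$ is not Poisson-distributed for $I=[-\delta,\delta]$ with $\delta$ small. Indeed, if $\nu(I)$ were Poisson it would have mean $\E\nu(I) = \Theta(\delta)$ and hence $\P(\nu(I)>1) = \Theta(\delta^2)$, contradicting the bound $\P(\nu(I)>1) = O(\delta^3)$. Once you have a single bounded set on which $\nu$'s count is non-Poisson as a random variable, the one-dimensional Raikov theorem applied to the independent sum $\nu(I)+\tilde\nu(I)$ immediately shows that $(\nu+\tilde\nu)(I)$ is not Poisson-distributed, so $\nu+\tilde\nu$ cannot be a Poisson process. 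You flirted with this ``single test set'' idea in your discussion but abandoned it because the direct union-bound computation fails; the resolution is not to abandon Raikov or to go multivariate, but to apply scalar Raikov to the scalar non-Poisson count supplied by the proof (rather than merely the statement) of Lemma \ref{lem:not-poisson}.
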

\begin{proof}
	By the proof of Lemma \ref{lem:not-poisson}, there is an interval $I$ so that $\nu(I)$ is not Poisson.  By Raikov's theorem, this implies that the convolution $\nu(I) + \tilde{\nu}(I)$ is not Poisson.
\end{proof}

Finally, Theorem \ref{th:main} follows from combining several pieces listed.
\begin{proof}[Proof of Theorem \ref{th:main}]
	Convergence of the point process is shown in Lemma \ref{lem:joint} while the property that it is not-Poisson is shown in Corollary \ref{cor:not-Poisson}.  The existence of a root near $1$ is shown in Lemma \ref{lem:root-near-one}.
\end{proof}

\section{Acknowledgments}
The author thanks Dhruv Mubayi, Will Perkins and Julian Sahasrabudhe for comments on a previous draft.

\appendix
\section{Negative Real Roots} \label{app:negative}
We indicate how we may prove the joint convergence described by Lemma \ref{lem:joint}.  
\subsection{Convergence to a Gaussian process}

Set $h_n(x):=n^{-1/2} f_n(-1 - x/n)$.  First we note that for each $i,j$ and fixed $x,y \in \R$ we have $$\E h_n^{(i)}(x) h_n^{(j)}(y) = \E g_n^{(i)}(x) g_n^{(j)}(y)\,.$$  We compute the cross terms now: \begin{lemma}\label{lem:cross-cov}
	For each fixed $x,y \in \R$ and integers $i,j \geq 0$, we have $$\lim_{n \to \infty}\E g_n^{(i)}(x) h_n^{(j)}(y) = 0\,.$$
\end{lemma}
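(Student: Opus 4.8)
The plan is to expand the expectation using independence of the coefficients and then extract the cancellation hidden in the sign pattern $(-1)^k$. Since the $\eps_k$ are i.i.d.\ with $\E\eps_k=0$ and $\E\eps_k^2=1$, differentiating $g_n(x)=n^{-1/2}f_n(1+x/n)$ and $h_n(y)=n^{-1/2}f_n(-1-y/n)$ term by term and using $\E\eps_k\eps_\ell=\one\{k=\ell\}$ gives
\[
\E g_n^{(i)}(x)\,h_n^{(j)}(y)=\frac{(-1)^j}{n^{1+i+j}}\sum_{k=0}^n(-1)^k a_k,\qquad a_k:=(k)_i(k)_j(1+x/n)^{k-i}(1+y/n)^{k-j},
\]
where $a_k\ge 0$ once $n$ is large. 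The point to emphasise is the contrast with $\E g_n^{(i)}(x)g_n^{(j)}(y)$, whose summands carry no sign and whose sum is genuinely of order $n^{1+i+j}$; so everything reduces to showing that the alternating sum satisfies $\sum_{k=0}^n(-1)^k a_k=o(n^{1+i+j})$.

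I would then record two crude estimates on $a_k$. First, uniformly in $0\le k\le n$ one has $a_k=O(n^{i+j})$, because $(k)_i(k)_j\le n^{i+j}$ and $(1+x/n)^{k-i}(1+y/n)^{k-j}$ is bounded in terms of $x,y$. Second, a consecutive-ratio bound: for $k\ge\sqrt n$,
\[
\frac{a_{k+1}}{a_k}=\frac{k+1}{k-i+1}\cdot\frac{k+1}{k-j+1}\cdot(1+x/n)(1+y/n)=1+O\!\left(\tfrac1k\right)+O\!\left(\tfrac1n\right)=1+O(n^{-1/2}),
\]
so $|a_{k+1}-a_k|=O(n^{-1/2}a_k)=O(n^{i+j-1/2})$ for $k\ge\sqrt n$.

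Next I would split the sum at $k=\lfloor\sqrt n\rfloor$. The initial block has at most $\sqrt n$ terms, each of size $O(n^{(i+j)/2})$ (here $(1+x/n)^{k-i}$ is bounded for $k<\sqrt n$), contributing $O(n^{(i+j+1)/2})=o(n^{1+i+j})$. For the tail, group the remaining indices into consecutive pairs $\{2m,2m+1\}$: each pair contributes $\pm(a_{2m}-a_{2m+1})=O(n^{i+j-1/2})$ by the ratio bound, there are $O(n)$ such pairs, and at most one index is left unpaired, contributing $O(n^{i+j})$. Hence the whole sum is $O(n^{i+j+1/2})+O(n^{i+j})=o(n^{1+i+j})$, and dividing by $n^{1+i+j}$ yields $\E g_n^{(i)}(x)h_n^{(j)}(y)\to 0$. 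Equivalently one can sum by parts against the bounded partial sums $B_k=\sum_{s\le k}(-1)^s\in\{0,1\}$, reducing matters to $|a_n|+\sum_k|a_{k+1}-a_k|=o(n^{1+i+j})$; or, most cleanly, observe that the sum equals $(1+x/n)^{-i}(1+y/n)^{-j}\sum_k(k)_i(k)_j w^k$ with $w:=-(1+x/n)(1+y/n)$, which is close to $-1$ and hence has $w-1$ bounded away from $0$, so differentiating the finite geometric series $\sum_{k=0}^n w^k$ shows $\sum_k(k)_i(k)_j w^k=O(n^{i+j})$.

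The only real content — and the main obstacle — is recognising that one must not bound $\sum(-1)^k a_k$ by $\sum a_k$: the latter is of order $n^{1+i+j}$, exactly the size of the diagonal covariance, and would not vanish after normalisation. All three routes above are just ways of cashing in the alternating sign; the mild technical nuisance in the pairing argument is that the consecutive-ratio estimate degrades near $k=0$, which is why the first $\sqrt n$ terms are peeled off and controlled trivially.
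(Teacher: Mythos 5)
Your proof is correct, modulo one harmless bookkeeping slip: differentiating $h_n$ produces a factor $(-1)^j$ from the chain rule and a factor $(-1)^{k-j}$ from $(-1-y/n)^{k-j}$, so these combine to $(-1)^k$ and the extra prefactor $(-1)^j$ in your displayed formula should not be there; this has no bearing on the argument. Your route to the cancellation is genuinely different from the paper's. The paper splits the sum over $k$ into even and odd indices and observes that, after dividing by $n^{1+i+j}$, each parity class is a Riemann sum converging to $\tfrac{1}{2}\int_0^1 t^{i+j}e^{(x+y)t}\,dt$ --- i.e.\ it recycles the computation of Lemma \ref{lem:covariance-calc} --- so the two halves cancel in the limit. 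You instead pair adjacent indices and bound each difference $a_{k+1}-a_k$ via the consecutive-ratio estimate $a_{k+1}/a_k=1+O(1/k)+O(1/n)$, peeling off the first $\sqrt{n}$ terms where that estimate degrades. Both arguments cash in the same alternating sign, but yours is more quantitative (it gives a rate $O(n^{-1/2})$ for the normalized cross-covariance, which the Riemann-sum comparison does not immediately provide) at the cost of being longer, while the paper's is a three-line corollary of a computation already done. Your third suggested route, differentiating the finite geometric series at $w=-(1+x/n)(1+y/n)$, which stays bounded away from $1$, is arguably the cleanest and also yields $O(n^{i+j})$ for the unnormalized sum; to make it literal one should first expand $(k)_i(k)_j$ as a linear combination of falling factorials $(k)_\ell$ with $\ell\le i+j$.
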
 
\begin{proof}
	Compute \begin{align*}
	\E g_n^{(i)}(x) h_n^{(j)}(y) &= \frac{1}{n^{1 + i + j}} \sum_{r = 0}^n (r)_i (r)_j(-1)^r (1 + x/n)^r (1 + y/n)^r \\
		&= \frac{1}{n^{1 + i +j}}\bigg(\sum_{r = 0}^{n/2} (2r)_i (2r)_j (1 + x/n)^{2r} (1 + y/n)^{2r}  \\
		&\quad- \sum_{r = 0}^{(2n-1)/2} (2r+1)_i (2r+1)_j (1 + x/n)^{2r+1} (1 + y/n)^{2r+1}\bigg) \\
		&= o(1) + \frac{1}{2} \int_0^1 t^{i+j} e^{(x + y)t}\,dt - \frac{1}{2} \int_0^1 t^{i+j} e^{(x + y)t}\,dt \\
		&= o(1)\,.
	\end{align*}
\end{proof}

From here, we see convergence of the pair $(g_n,h_n)$ fairly quickly.  \begin{lemma}
	For each $M$ the function $(g_n,g_n',g_n'',h_n,h_n',h_n'')$ converge in distribution on the space of continuous functions from $[-M,M]^6 \to \R^6$ endowed with the uniform topology to $(g,g',g'',\tilde{g},\tilde{g}',\tilde{g}'')$ where $\tilde{g}$ is an independent copy of $g$.
\end{lemma}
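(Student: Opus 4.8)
The plan is to mimic, essentially verbatim, the argument from Sections \ref{sec:process} and \ref{sec:zeros} but now applied to the six-coordinate process. The key observation is that nothing in that argument used anything about the covariance structure of $g_n$ and its derivatives beyond two facts: (i) the covariances of all derivatives converge (Lemma \ref{lem:covariance-calc}), which feeds the finite-dimensional convergence, and (ii) the uniform $L^2$ bound on each derivative at a point (line \eqref{eq:var-expand}), which feeds tightness via the modulus-of-continuity estimate. Both facts have exact analogues for $h_n$: the computation opening the appendix shows $\E h_n^{(i)}(x) h_n^{(j)}(y) = \E g_n^{(i)}(x) g_n^{(j)}(y)$, so (i) and (ii) transfer with identical constants, and Lemma \ref{lem:cross-cov} shows the cross-covariances vanish in the limit. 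So the limiting six-dimensional Gaussian object has block-diagonal covariance: the $(g,g',g'')$ block is exactly that of $g$, the $(h,h',h'')$ block is also that of $g$, and the cross block is zero. A mean-zero Gaussian vector with block-diagonal covariance has independent blocks, which is precisely the statement that the limit is $(g,g',g'',\tilde g,\tilde g',\tilde g'')$ with $\tilde g$ an independent copy of $g$.

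Concretely, the steps are as follows. First I would record the covariance computation: for fixed points and derivative orders, $\E h_n^{(i)}(x) h_n^{(j)}(y) \to \int_0^1 t^{i+j} e^{(x+y)t}\,dt$ (same as Lemma \ref{lem:covariance-calc}) and $\E g_n^{(i)}(x) h_n^{(j)}(y) \to 0$ (Lemma \ref{lem:cross-cov}). Second, I would prove the finite-dimensional convergence: for distinct points $x_1,\dots,x_k$, the vector $(g_n^{(j)}(x_i), h_n^{(j)}(x_i))_{i\in[k],\,0\le j\le 2}$ converges in distribution to the corresponding Gaussian, by Cram\'er--Wold plus Lindeberg--Feller exactly as in Lemma \ref{lem:fidi}. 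Here one writes a generic linear combination $\sum c_{i,j} g_n^{(j)}(x_i) + \sum d_{i,j} h_n^{(j)}(x_i)$ as $\frac{1}{\sqrt n}\sum_{r=0}^n \eps_r\big(\sum c_{i,j}(r)_i(1+x_i/n)^r + \sum d_{i,j}(r)_i(-1)^r(1+x_i/n)^r\big)$, which is a sum of independent mean-zero terms satisfying the Lindeberg condition (the summands are $O(n^{-1/2})$ uniformly after absorbing the polynomial factors, just as before), so it is asymptotically Gaussian with the variance dictated by the covariance limits. Third, I would establish tightness of the six-tuple on $[-M,M]$: by Fact \ref{fact:tight} it suffices to have tightness of each of the six coordinate processes, each $h_n^{(j)}(0)$ is tight by the finite-dimensional convergence, and the modulus-of-continuity condition \eqref{eq:condition-tight} for each $h_n^{(j)}$ follows from the mean value theorem together with $\E \max_{r\in[-M,M]}|h_n^{(j)}(r)| \le e^M$ — which is proved by the identical expansion as Lemma \ref{lem:max-bound}, since it used only line \eqref{eq:var-expand} and that identity holds verbatim for $h_n$. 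Fourth, I would conclude convergence in distribution on $\CC([-M,M])^6$ with the uniform topology via tightness plus finite-dimensional convergence (as in the proof of Lemma \ref{lem:convergence-function}), and identify the limit: since it is mean-zero Gaussian with the $g$-$h$ cross-covariances all zero, the triple $(g,g',g'')$ and the triple $(\tilde g,\tilde g',\tilde g'')$ are independent, and each has the law of $(g,g',g'')$ by the covariance computation.

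I do not expect a genuine obstacle here; the whole point is that this is bookkeeping on top of machinery already in place. The one place requiring a sliver of care is the finite-dimensional CLT: one must check that the Lindeberg condition still holds for the combined linear form, but this is immediate because the new summand $\sum d_{i,j}(r)_i(-1)^r(1+x_i/n)^r$ has exactly the same order of magnitude as the old one (the sign $(-1)^r$ does not affect absolute values), so the uniform smallness of the normalized summands is unchanged. The other mild point is the identification of the limit as genuinely having an \emph{independent} copy rather than merely uncorrelated coordinates — but this is automatic for Gaussian vectors, and block-diagonality of the covariance matrix of $(g^{(j)}(x_i), \tilde g^{(j)}(x_i))_{i,j}$ for every finite collection of points is exactly what Lemmas \ref{lem:covariance-calc} and \ref{lem:cross-cov} give. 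With the convergence of the six-tuple in hand, Lemma \ref{lem:joint} then follows by repeating the arguments of Section \ref{sec:zeros} (Lemmas \ref{lem:g-deriv} through \ref{lem:double} and Corollary \ref{cor:points}) with $(\nu_n,\mu_n)$ in place of $\nu_n$ and $U$ ranging over finite unions of intervals in the disjoint union of the two copies of $\R$.
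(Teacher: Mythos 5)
Your proposal is correct and follows essentially the same route as the paper: finite-dimensional convergence via Cram\'er--Wold and Lindeberg--Feller using the covariance limits of Lemma \ref{lem:covariance-calc} and the vanishing cross-covariances of Lemma \ref{lem:cross-cov}, tightness of each coordinate via Lemmas \ref{lem:tightness} and \ref{lem:MOC} combined through Fact \ref{fact:tight}, and identification of the block-diagonal Gaussian limit as an independent copy. The paper's proof is just a terser version of exactly this argument, so there is nothing to flag.
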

\begin{proof}
	Arguing as in Lemma \ref{lem:fidi} and using Lemma \ref{lem:cross-cov} shows convergence of finite dimensional distributions.  By Lemmas \ref{lem:tightness} and \ref{lem:MOC} along with Fact \ref{fact:tight}, both triples $(g_n,g_n',g_n'')$ and $(h_n,h_n',h_n'')$ are tight; by Fact \ref{fact:tight} we thus have that $(g_n,g_n',g_n'',h_n,h_n',h_n'')$ is tight as well.  Convergence in distribution then follows as in Lemma \ref{lem:convergence-function}.
\end{proof}

\subsection{Convergence to a 2-dimensional point process}

From here, we are ready to show convergence of $(\nu_n,\mu_n) \to (\nu,\tilde{\nu})$.
\begin{proof}[Proof of Lemma \ref{lem:joint}]
	To show convergence of the $2$-dimensional point process $(\nu_n,\mu_n)$, we will show convergence using Lemma \ref{lem:Kallenberg} when both $\nu_n$ and $\mu_n$ are restricted to $[-M,M]$ for each $M > 0$.  It is thus sufficient to show the following two criteria: \begin{enumerate}
		\item \label{it:double-joint} For each $M$ there are sequences $(\{ I_{m,j}^{(i)} :j = 1,\ldots, k_m, i = 1,2 \} )$ of partitions of $[-M,M]$ into intervals with $\lim_{m \to \infty} \max_{1 \leq j \leq k_m} \max_{i \in \{1,2\}} |I^{(i)}_{m,j}| = 0$ such that $$\lim_{m \to \infty} \limsup_{n \to \infty} \left(\sum_{j = 1}^{k_m} \P(\nu_n(I^{(1)}_{m,j} > 1 ))  + \sum_{j = 1}^{k_m} \P(\mu_n(I^{(2)}_{m,j} > 1 )) \right) = 0\,.$$
		\item \label{it:avoid-joint} For each finite union of bounded intervals $U_1, U_2 \subset M$ 
		$$\P(\nu_n(U_1) = \mu_n(U_2) = 0 ) \to \P(\nu(U_1) = 0) \P(\nu(U_2) = 0)\,.$$
	\end{enumerate}  

Criterion \eqref{it:double-joint} follows due to Lemma \ref{lem:double} applying directly for $\nu_n$ and translating identically for $\mu_n$.  Criterion \eqref{it:avoid-joint} similarly may be proved in the same manner as Lemma \ref{lem:avoid}.  In particular, if we set $$F_0(\phi_1,\phi_2) = \one\{ \exists ~x_j \in U_j: \phi_j(x_j) =0  \}$$
we may again approximate from above by a continuous approximation $F_\eps$ and note that $\E F_\eps(g_n,h_n) \to \E F_\eps(g,h)$.  The proof of Lemma \ref{lem:avoid} lists three events, one of which must occur for $F_0$ to differ from $F_\eps$; let these events be called  \emph{exceptional} events.  Directly applying the proof of Lemma \ref{lem:avoid} shows that the probability $g_n$ or $h_n$ exhibits an exception event is $O(\eps^{1/8})$ and so $$\limsup_{n \to \infty} | \E F_0 (g_n,h_n) - \E F_0 (g,\tilde{g})| = O(\eps^{1/8})$$ by a union bound.  Taking $\eps \to 0$ completes the proof.
\end{proof}

\bibliographystyle{abbrv}
\bibliography{Bib}
\end{document}